\theoremstyle{plain}
\newtheorem{thm}{Theorem}[section]
\newtheorem{pro}[thm]{Proposition}
\newtheorem{lem}[thm]{Lemma}
\newtheorem{cor}[thm]{Corollary}
\newtheorem{con}[thm]{Conjecture}
\theoremstyle{definition}
\newtheorem{nt}[thm]{Notation}
\newtheorem{rem}[thm]{Remark}
\newtheorem{exa}[thm]{Example}
\newcommand{\Z}{\mathbb{Z}}
\newcommand{\R}{\mathbb{R}}
\newcommand{\Q}{\mathbb{Q}}
\newcommand{\PS}{\mathbb{P}}
\newcommand{\mcal}{\mathcal}
\newcommand{\id}{\mathrm{id}}
\DeclareMathOperator{\inte}{int}
\DeclareMathOperator{\GL}{GL}
\DeclareMathOperator{\Aut}{Aut}
\DeclareMathOperator{\Bir}{Bir}
\DeclareMathOperator{\Pic}{Pic} 
\DeclareMathOperator{\Nef}{Nef}
\DeclareMathOperator{\Amp}{Amp}
\DeclareMathOperator{\Effb}{\overline{\mathrm{Eff}}}
\DeclareMathOperator{\Eff}{\mathrm{Eff}}
\DeclareMathOperator{\Mov}{Mov}
\DeclareMathOperator{\B}{Big}
\begin{document}
\title[On the Cone conjecture]{On the Cone conjecture for Calabi-Yau manifolds with Picard number two}

\author{Vladimir Lazi\'c}
\address{Mathematisches Institut, Universit\"at Bonn, Endenicher Allee 60, 53115 Bonn, Germany}
\email{lazic@math.uni-bonn.de}

\author{Thomas Peternell}
\address{Mathematisches Institut, Universit\"at Bayreuth, 95440 Bayreuth, Germany}
\email{thomas.peternell@uni-bayreuth.de}

\thanks{We thank J.\ Hausen, A.~Prendergast-Smith, D.-Q.~Zhang and the referee for very useful comments and suggestions. We were supported by the DFG-Forschergruppe 790 ``Classification of Algebraic Surfaces and Compact Complex Manifolds".}

\begin{abstract}
Following a recent work of Oguiso, we calculate explicitly the groups of automorphisms and birational automorphisms on a Calabi-Yau manifold with Picard number two. When the group of birational automorphisms is infinite, we prove that the Cone conjecture of Morrison and Kawamata holds.
\end{abstract}

\maketitle
\setcounter{tocdepth}{1}
\tableofcontents

\section{Introduction}

The Cone conjecture of Morrison and Kawamata is concerned with the structure of the nef and the movable cones on a Calabi-Yau manifold in presence of automorphisms or birational automorphisms. To be more precise, consider a Calabi-Yau manifold $X$ with nef cone $\Nef(X)$, the movable cone $\overline{\Mov}(X)$, and effective cone $\Eff(X)$. A Calabi-Yau manifold in our context is a projective manifold $X$ with trivial canonical bundle such that $H^1(X,\mathcal O_X) = 0$. As usual, $\Aut(X)$ respectively $\Bir(X)$ denotes the group of automorphisms respectively birational automorphisms of $X$. Then the Cone conjecture can be stated as follows. 

\begin{con}
Let $X$ be a Calabi-Yau manifold. 
\begin{enumerate}
\item There exists a rational polyhedral cone $\Pi$ which is a fundamental domain for the action of $\Aut(X)$ on $\Nef(X)\cap\Eff(X)$, in the sense that
$$\Nef(X)\cap\Eff(X)=\bigcup_{g\in\Aut(X)}g^*\Pi,$$ 
and $\inte\Pi\cap \inte g^*\Pi=\emptyset$ unless $g^*=\id$.
\item There exists a rational polyhedral cone $\Pi'$ which is a fundamental domain for the action of $\Bir(X)$ on $\overline{\Mov}(X)\cap\Eff(X)$.
\end{enumerate}
\end{con}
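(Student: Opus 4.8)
The conjecture is not established in general, so the plan is to prove it under the standing hypotheses of this paper: Picard number two and infinite $\Bir(X)$. Since the Picard number is two, the Néron--Severi space $\Pic(X)\otimes\R$ is a real plane, and each of $\Nef(X)$, $\overline{\Mov}(X)$ and $\Eff(X)$ is a two-dimensional cone bounded by two rays. The groups $\Aut(X)\subseteq\Bir(X)$ act on this plane preserving the integral lattice and the relevant cones ($\Aut(X)$ preserves $\Nef(X)$, while $\Bir(X)$ preserves $\overline{\Mov}(X)$, being isomorphisms in codimension one), so we obtain representations $\rho\colon\Bir(X)\to\GL_2(\Z)$ with finite kernel. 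The whole problem thereby reduces to understanding an infinite subgroup $G=\rho(\Bir(X))\subseteq\GL_2(\Z)$ acting on a proper two-dimensional cone.

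The first key step is a group-theoretic dichotomy. A subgroup of $\GL_2(\Z)$ preserving a proper cone must preserve its unordered pair of boundary rays. If both rays were rational, the finite-index subgroup fixing each of them would consist of integral matrices of determinant $\pm1$ with rational eigendirections, forcing eigenvalues $\pm1$ and hence finiteness; so the assumption that $G$ be infinite forces at least one, and then both, boundary rays to be irrational. In that case a generator of the infinite part is a hyperbolic element $g$ with real quadratic-irrational eigenvalues $\lambda,\lambda^{-1}$ whose eigendirections are exactly the two irrational boundary rays of the invariant cone, and $G$ is virtually infinite cyclic. This is precisely the point at which the explicit calculation of $\Aut(X)$ and $\Bir(X)$ (following Oguiso), carried out earlier, pins down both the cone and its generator.

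Granting this, the fundamental domain is constructed directly. Choose a rational class $w$ in the interior of the cone and let $\Pi$ be the cone spanned by $w$ and $g^{*}w$; since both spanning classes are rational, $\Pi$ is rational polyhedral. Projectivizing the plane, $g$ acts on the open arc between the two eigen-endpoints as a fixed-point-free hyperbolic Möbius transformation, with attracting and repelling points at the endpoints, so the iterates $(g^{*})^{n}\Pi$ tile the open cone with pairwise disjoint interiors and accumulate at the two boundary rays. This yields $\bigcup_{n}(g^{*})^{n}\Pi$ equal to the cone and $\inte\Pi\cap\inte (g^{*})^{n}\Pi=\emptyset$ for $n\neq0$, which is the required fundamental-domain property. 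For part (1) one applies this with $G=\rho(\Aut(X))$ and the cone $\Nef(X)\cap\Eff(X)$; for part (2), with $G=\rho(\Bir(X))$ and $\overline{\Mov}(X)\cap\Eff(X)$.

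Two points need care, and I expect the first to be the main obstacle. One must identify the precise cone being tiled: that the boundary rays of $\Nef(X)\cap\Eff(X)$, respectively $\overline{\Mov}(X)\cap\Eff(X)$, coincide with the irrational eigenrays of $g$, and one must control how these rays meet $\Eff(X)$, which is in general neither open nor closed. This is where the geometry, rather than the linear algebra, enters, and where degenerate configurations must be ruled out; it is exactly the place where the hypothesis of an infinite group does the essential work. The second, milder, point is that $G$ is only virtually cyclic, so the representation may have a finite kernel and $g$ may generate only a finite-index subgroup; to pass from the cyclic fundamental domain to one for the full $\Aut(X)$ or $\Bir(X)$ one subdivides $\Pi$ under the action of the finite part and reassembles, an operation that preserves rational polyhedrality.
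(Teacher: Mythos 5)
Your central construction---pick a rational class $w$ in the interior of the cone, set $\Pi=\R_+w+\R_+g^*w$ for a hyperbolic generator $g$, and let the iterates tile the open cone, with their boundary rays converging to the two irrational eigenrays---is exactly the paper's proof of Theorem \ref{thm:BirInfiniteKM} in the case where $\mcal A^+(X)$ resp.\ $\mcal B^+(X)$ is infinite cyclic, and your dichotomy reproduces the content of Proposition \ref{proposition:Oguiso} and Theorem \ref{automorphismDet1}. But there are genuine gaps. The most serious one: part (1) is not covered when $\Bir(X)$ is infinite but $\Aut(X)$ is finite. In that case there is no hyperbolic element in $\mcal A(X)$ to tile with, and your instruction to ``apply this with $G$ the image of $\Aut(X)$'' has nothing to apply. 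The paper settles this case with an input your proposal never invokes: by Lemma \ref{lemma:plusminus} and Proposition \ref{proposition:birgroupBoundary}, finiteness of $\Aut(X)$ together with infiniteness of $\Bir(X)$ forces $\Nef(X)\subseteq\Mov(X)\subseteq\B(X)$, whence $\Nef(X)$ is rational polyhedral by Theorem \ref{thm:NefInsideBig} (local rational polyhedrality of $\Nef(X)$ and $\overline{\Mov}(X)$ inside the big cone), and then the fundamental domain for the group $\mcal A(X)$, of order at most $2$ by Proposition \ref{automorphism}, is built by averaging an integral class as in Propositions \ref{proposition:weakKMconjecture} and \ref{proposition:NefInMov}. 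No amount of linear algebra in $\GL_2(\Z)$ substitutes for this geometric theorem, since without it one cannot even say the boundary rays of $\Nef(X)$ are rational.

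The point you flag as ``the main obstacle''---identifying the cone being tiled---is indeed essential, but you leave it unresolved, and its resolution is short enough that it should have been supplied: irrationality of the boundary rays plus Theorem \ref{thm:NefInsideBig} forces $\Nef(X)=\Effb(X)$ resp.\ $\overline{\Mov}(X)=\Effb(X)$, and a class $y$ spanning an irrational extremal ray cannot be effective, since writing $y=\sum\delta_jD_j\geq0$ would require at least two distinct prime divisors, contradicting extremality of the ray in $\Effb(X)$. Hence $\Nef(X)\cap\Eff(X)=\Amp(X)$ and $\overline{\Mov}(X)\cap\Eff(X)=\Mov(X)$ (Corollary \ref{cor:AutInfinite}(2), Lemma \ref{lem:BirInfiniteMov}), which are exactly the open cones your tiling exhausts. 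Finally, your handling of the virtually cyclic case by ``subdividing $\Pi$ under the finite part and reassembling'' is too vague to verify: an arbitrary subdivision does not have interiors disjoint from their translates under the infinite dihedral group. The paper's construction is specific---with $f$ generating $\mcal A^+(X)$, $\tau\in\mcal A^-(X)$ and $\theta=f\tau$, take $z_1=x+\tau x$ and $z_2=z_1+fz_1$, so that $\tau z_1=z_1$ and $\theta z_2=z_2$ via Lemma \ref{lemma:squaredMinus}; the half-cone $\R_+z_1+\R_+z_2$ works precisely because its two boundary rays are fixed by the two involutions. Your approach is the right one where it applies, but the missing case (1) with $\Aut(X)$ finite and the two unproved identifications above must be filled in before this is a proof.
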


There is also the following weaker form.
 
\begin{con}
Let $X$ be a Calabi-Yau manifold. 
\begin{enumerate}
\item There exists a (not necessarily closed) cone $\Pi$ which is a weak fundamental domain for the action of $\Aut(X)$ on $\Nef(X)\cap\Eff(X)$, in the sense that 
$$\Nef(X)\cap\Eff(X)=\bigcup_{g\in\Aut(X)}g^*\Pi,$$
$\inte\Pi\cap \inte g^*\Pi=\emptyset$ unless $g^*=\id$, and for every $g\in\Aut(X)$, the intersection $\Pi\cap g^*\Pi$ is contained in a rational hyperplane.
\item There exists a polyhedral cone $\Pi'$ which is a weak fundamental domain for the action of $\Bir(X)$ on $\overline{\Mov}(X)\cap\Eff(X)$.
\end{enumerate}
\end{con}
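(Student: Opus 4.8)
The plan is to establish the weak form of the Cone conjecture stated above in the setting of the paper, namely for a Calabi--Yau manifold $X$ with $\rho(X)=2$ whose group of (birational) automorphisms is infinite, by reducing the entire problem to linear algebra on the two-dimensional space $N^1(X)_{\R}\cong\R^2$. The action on the N\'eron--Severi group gives a homomorphism from $\Aut(X)$ (respectively $\Bir(X)$) to $\GL\big(N^1(X)_{\Z}\big)\cong\GL_2(\Z)$ with finite kernel; let $G$ denote the image. By construction $G$ preserves the lattice, the effective cone $\Eff(X)$, and the invariant cone of interest---$\Nef(X)$ in part (1) and $\overline{\Mov}(X)$ in part (2). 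The problem thus becomes: for an \emph{infinite} subgroup $G\le\GL_2(\Z)$ preserving a proper convex cone, produce a (weak) fundamental domain for its action on that cone intersected with $\Eff(X)$.

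First I would pin down the structure of $G$, following Oguiso's analysis. An infinite subgroup of $\GL_2(\Z)$ preserving a proper cone cannot be finite, and after passing to a finite-index subgroup it is generated by a single hyperbolic element $\phi$ of infinite order, with two distinct real eigenvalues $\lambda,\lambda^{-1}$ (with $\lambda>1$ a quadratic unit) whose eigenvectors span two irrational rays $R_+,R_-$. Indeed, had the invariant cone been rational polyhedral, the subgroup of $\GL_2(\Z)$ fixing it would be finite, so infinitude of $G$ already forces irrational extremal rays and the hyperbolic picture. The decisive dynamical fact is that for any class $w$ in the interior of the cone, the forward iterates $(\phi^n)^*w$ converge projectively to $R_+$ and the backward iterates to $R_-$; a short separate argument disposes of the parabolic case and records the dihedral possibility coming from a determinant $-1$ generator.

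Next I would identify the extremal rays of the relevant cone. Since it is $G$-invariant, closed and proper, while the only $\phi$-invariant rays are $R_\pm$, the two boundary rays of $\Nef(X)$ (respectively $\overline{\Mov}(X)$) must be exactly $R_+$ and $R_-$. I would then construct the fundamental domain explicitly: choose a rational class $w$ in the interior and let $\Pi$ be the closed two-dimensional subcone spanned by $w$ and $\phi^*w$. The convergence above gives $\bigcup_{n\in\Z}(\phi^n)^*\Pi$ equal to the open cone between $R_-$ and $R_+$, the interiors of the translates are pairwise disjoint, and the overlap $\Pi\cap\phi^*\Pi$ is the single ray through the \emph{rational} class $\phi^*w$, hence lies in a rational hyperplane---precisely the weak condition. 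To pass from $\langle\phi\rangle$ to all of $G$, I would subdivide $\Pi$ into the finitely many rational subcones indexed by the finite quotient $G/\langle\phi\rangle$.

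The main obstacle is the boundary behaviour, and it is exactly what distinguishes the weak form from the strong one. Because $R_\pm$ are irrational, no single translate $(\phi^n)^*\Pi$ ever reaches them, so the union recovers only the \emph{open} cone; one must therefore check that intersecting with $\Eff(X)$ removes these irrational boundary rays---that is, that the extremal eigenray classes are nef but not effective---so that $\Nef(X)\cap\Eff(X)$ coincides with what the translates actually cover, with no uncovered rational boundary ray left over. Reconciling the irrational nef/movable boundary with the (typically rational) extremal rays of $\Eff(X)$, together with a careful treatment of the finite kernel of $G$ and of the degenerate parabolic and dihedral cases where the clean eigenray picture breaks down and an ad hoc domain must be exhibited, is where the real work lies.
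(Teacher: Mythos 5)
Your construction in the purely cyclic case is essentially the paper's own (Theorem 4.5(i), first half): pick $x$ in the interior, set $\Pi=\R_+x+\R_+hx$ for a generator $h$, and use projective convergence of the rays $\R_+h^kx$ to the two irrational boundary rays to see that the translates tile. The genuine gap is the determinant $-1$ case, which you explicitly defer (``an ad hoc domain must be exhibited \dots\ where the real work lies''). Your stated plan there --- ``subdivide $\Pi$ into the finitely many rational subcones indexed by the finite quotient $G/\langle\phi\rangle$'' --- does not work as written: for the translates of a subcone under an involution $\tau\in\mcal A^-(X)$ to have disjoint interiors, the walls of the subdivision must be rays \emph{fixed} by the relevant involutions, and such fixed rays have to be produced, not chosen arbitrarily. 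This is exactly what the paper does: with $f$ a generator of $\mcal A^+(X)$, $\tau\in\mcal A^-(X)$ and $\theta=f\tau$, one has $\tau^2=\theta^2=\id$ (Lemma 3.1), and the integral classes $z_1=x+\tau x$, $z_2=z_1+fz_1$ satisfy $\tau z_1=z_1$, $\theta z_2=z_2$; then $\Pi=\R_+z_1+\R_+z_2$ works because $\Pi\cup\theta\Pi=\R_+z_1+\R_+fz_1$ tiles $\Amp(X)$ under $\langle f\rangle$, and the identities $\tau z_1=z_1$, $\theta z_2=z_2$ rule out any $\lambda=f^k\tau$ or $f^\ell\theta$ moving $z_1$ or $z_2$ into $\inte\Pi$. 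Without this construction your proposal is missing the heart of the argument. (Incidentally, passing to a finite-index subgroup and worrying about parabolics is unnecessary: every element of $\mcal A^+(X)$ fixes both boundary rays of the invariant cone, hence is automatically real-diagonalizable, and $\mcal A^+(X)\simeq\Z$ follows from discreteness of the eigenvalue logarithms as in Theorem 3.9.)

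There is also a scope mismatch. The statement to be proved is the \emph{weak} conjecture, whose genuine content at $\rho(X)=2$ is the case where the group is \emph{finite}: there the paper (Proposition 4.1, via $|\mcal A(X)|\leq2$) takes $g\in\mcal A^-(X)$, the invariant integral class $y=x+gx$, and the domain $\Pi=\ell_1+\R_+y$, which is only a weak fundamental domain precisely because the edge $\ell_1$ may be irrational. Your proposal omits this case entirely, and in the infinite case your rational choice of $w$ in fact yields a rational polyhedral domain, i.e.\ the strong conjecture (as in the paper's Theorem 4.5), so your remark that the overlap ray being rational gives ``precisely the weak condition'' is beside the point. Finally, the boundary-effectivity step you correctly flag but do not prove is short: if an irrational extremal class of $\Nef(X)=\Effb(X)$ (resp.\ $\overline{\Mov}(X)=\Effb(X)$) were effective, it would be a sum of at least two distinct prime divisors, contradicting extremality of its ray --- this is the paper's Corollary 3.6(2) and Lemma 4.4.
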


For the study of the Cone conjectures, the action 
$$ r\colon \Bir(X) \to \GL(N^1(X)) $$
on the Neron-Severi group $N^1(X)$ is important. We denote by $\mathcal B(X)$ its image, and by $\mathcal A(X)$ the image of the automorphism group. 
\vskip .2cm 
Based on and inspired by recent work of Oguiso \cite{Og12} we prove the following results. 

\begin{thm}
Let $X$ be a Calabi-Yau manifold of Picard number $2$. Then either $|\mcal A(X)|\leq2$, or $\mcal A(X)$ is infinite; and either $|\mcal B(X)|\leq2$, or $\mcal B(X)$ is infinite.
\end{thm}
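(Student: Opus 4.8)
The plan is to reduce the theorem to an elementary fact about finite groups acting linearly on a strongly convex two-dimensional cone. Write $N := N^1(X)_{\R}\cong\R^2$. First I would identify, for each of the two groups, an invariant cone in $N$. Since $X$ is Calabi--Yau, $K_X$ is trivial and $X$ is a minimal model, so every birational self-map of $X$ is an isomorphism in codimension one; consequently the pullback action makes $\mathcal B(X)$ preserve the movable cone $\overline{\Mov}(X)$, while $\mathcal A(X)$ preserves the nef cone $\Nef(X)$ because automorphisms carry ample classes to ample classes. Both cones lie inside the pseudoeffective cone $\Effb(X)$, which is strongly convex on a projective manifold, and both contain the open, nonempty ample cone, hence are full-dimensional. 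Because $\rho(X)=2$, a full-dimensional strongly convex closed cone in $N\cong\R^2$ is a wedge bounded by exactly two distinct rays $R_1\neq R_2$.

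Next, let $G$ be either $\mathcal A(X)$ or $\mathcal B(X)$ and let $\mathcal C$ be the associated cone, with boundary rays $R_1, R_2$. It suffices to prove that if $G$ is finite then $|G|\le 2$. Each $g\in G$ is a linear automorphism of $\mathcal C$, hence a homeomorphism carrying $\partial\mathcal C$ onto itself; mapping rays to rays, it permutes the two connected components $R_1, R_2$ of $\partial\mathcal C\setminus\{0\}$. This defines a homomorphism $\phi\colon G\to\mathrm{Sym}\{R_1,R_2\}\cong\Z/2$, and I would show that $\ker\phi$ is trivial. Indeed, if $g$ fixes both rays, then generators $e_1\in R_1$ and $e_2\in R_2$ are eigenvectors of $g$; preservation of $\mathcal C$ forces both eigenvalues to be positive, and finiteness of the order of $g$ forces them to be roots of unity, hence equal to $1$. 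As $e_1, e_2$ are linearly independent, $g=\id$. Therefore $\phi$ is injective and $|G|\le 2$, which together with the trivial alternative ``$G$ infinite'' gives precisely the stated dichotomy.

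The only geometric inputs are that $\Bir(X)$ preserves $\overline{\Mov}(X)$ and that the two cones are strongly convex and full-dimensional; both are standard for minimal, hence Calabi--Yau, manifolds, so the argument is essentially self-contained. The step I would treat most carefully is the strong convexity: were $\mathcal C$ to contain a line or to degenerate to a half-plane, its boundary would not split into two rays and $\phi$ would be undefined. This is ruled out by the inclusion $\mathcal C\subseteq\Effb(X)$ into a strongly convex cone, so I anticipate no genuine obstacle here; the delicate point is purely in pinning down the shape of the cones. It is worth noting that neither the lattice $N^1(X)_{\Z}$ nor the intersection form, both available, are needed for this particular dichotomy.
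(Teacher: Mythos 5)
Your proposal is correct and follows essentially the same route as the paper: the paper's Proposition~3.4 proves the dichotomy by showing that any finite-order element of $\mcal A^+(X)$ (resp.\ $\mcal B^+(X)$) fixes the boundary rays with a positive eigenvalue that is a root of unity, hence equals $\id$, and then invokes the coset decomposition $\mcal A^-(X)=\mcal A^+(X)h$ of Lemma~3.2 to get $|\mcal A(X)|\leq 2$. Your homomorphism $\phi\colon G\to\mathrm{Sym}\{R_1,R_2\}$ is just a repackaging of the paper's determinant splitting $\mcal A(X)=\mcal A^+(X)\cup\mcal A^-(X)$ (an element fixing both rays has positive eigenvalues, hence $\det=1$, and one swapping them has $\det=-1$), so the two arguments coincide in substance.
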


In fact, we explicitly calculate the groups $\mcal A(X)$ and $\mcal B(X)$, and for more detailed information we refer to Section 3. The consequences for the Cone conjectures can be summarized as follows. 

\begin{thm}
Let $X$ be a Calabi-Yau manifold with Picard number $2$. Then
\begin{enumerate}
\item if the group $\Bir(X)$ is finite, then the weak Cone conjecture holds on $X$;
\item if the group $\Bir(X)$ is infinite, then the Cone conjecture holds on $X$.
\end{enumerate}
\end{thm}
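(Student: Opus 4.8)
The plan is to carry both assertions over to the two-dimensional space $N^1(X)_\R\cong\R^2$ and to study the action of the image groups $\mcal A(X)$ and $\mcal B(X)$ on the relevant cones, feeding in the explicit description from the previous theorem. Since the $\Aut(X)$-action on $\Nef(X)\cap\Eff(X)$ and the $\Bir(X)$-action on $\overline{\Mov}(X)\cap\Eff(X)$ factor through $\mcal A(X)$ and $\mcal B(X)$, I would first match the dichotomy on $\Bir(X)$ with one on the images: using that the kernel of $r$ is finite on a Calabi--Yau manifold, $\Bir(X)$ is finite if and only if $\mcal B(X)$ is, and $\Aut(X)\subseteq\Bir(X)$ gives $\mcal A(X)\subseteq\mcal B(X)$. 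Thus in part (1) both image groups are finite (of order at most $2$), while in part (2) the group $\mcal B(X)$ is infinite.

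For the infinite case the key observation is that an infinite subgroup of $\GL(N^1(X))\cong\GL_2(\Z)$ preserving a strictly convex cone is virtually cyclic and contains a hyperbolic element $g$ with real eigenvalues $\lambda^{\pm1}$, $\lambda>1$, whose two eigendirections are irrational and form the boundary of the invariant cone. For the $\Bir$-part, the cone $\overline{\Mov}(X)\cap\Eff(X)$ is $g$-invariant and strictly convex, hence it is exactly this eigencone; choosing a rational class $v$ in its interior, the cone $\Pi'$ spanned by $v$ and $g^*v$ is rational polyhedral, and the translates $(g^n)^*\Pi'$ tile the interior while accumulating only at the irrational boundary rays. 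This yields the rational polyhedral fundamental domain demanded by the strong conjecture.

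The $\Aut$-part in case (2) splits according to whether $\mcal A(X)$ is infinite or finite, and this is where the real work lies. If $\mcal A(X)$ is infinite, the same tiling argument applies verbatim to $\Nef(X)\cap\Eff(X)$, whose boundary is then cut out by the irrational eigendirections of a hyperbolic automorphism. If $\mcal A(X)$ is finite while $\mcal B(X)$ is infinite, then $\Nef(X)$ is a single chamber in the decomposition of $\overline{\Mov}(X)$ into the nef cones of the (infinitely many) marked minimal models. The hard part is to pin down the boundary of this chamber: since a flopping contraction is a projective morphism, the walls separating adjacent chambers are rational, whereas the eigendirections are irrational and are only accumulated by the chambers; hence $\Nef(X)$ is bounded by two rational rays and is rational polyhedral. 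A finite group acting on a rational polyhedral cone admits a rational polyhedral fundamental domain, so the strong conjecture again holds.

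Finally, in the finite case (1) the cones $\overline{\Mov}(X)\cap\Eff(X)$ and $\Nef(X)\cap\Eff(X)$ may have irrational boundary rays, so no rational polyhedral fundamental domain need exist and I would instead verify the weak conjecture directly. When the image group is trivial one takes $\Pi$ to be the whole cone, and when it has order two the nontrivial element is a reflection whose fixed line is rational (being an eigenline of an integral involution), so splitting the cone along this line produces a weak fundamental domain whose overlap $\Pi\cap g^*\Pi$ lies on a rational hyperplane. The genuine obstacle throughout is thus the infinite $\Aut$-part of (2): controlling the interplay between the rationality of the flopping walls and the irrationality of the limiting eigendirections is precisely what separates the rational polyhedral domain of the strong conjecture from the merely weak one.
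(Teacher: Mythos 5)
Your overall strategy coincides with the paper's (reduce to the image groups $\mcal A(X),\mcal B(X)$, tile the open cone by translates of a rational simplicial cone in the infinite case, split along the rational fixed line of the integral involution in the finite case), but there is one genuine gap and two unaddressed steps. The main gap is the infinite dihedral case. By Lemma \ref{lemma:plusminus} and Theorem \ref{automorphismDet1}, when $\mcal B(X)$ is infinite it is either $\mcal B^+(X)\simeq\Z$ or infinite dihedral, $\mcal B(X)=\mcal B^+(X)\cup\mcal B^+(X)\tau$ with $\tau^2=\id$ and $\det\tau=-1$. Your cone $\Pi'=\R_+v+\R_+ g^*v$, for an \emph{arbitrary} rational interior class $v$, is a fundamental domain only for the index-two cyclic subgroup: the reflection $\tau$ swaps the two irrational eigenrays, and $\inte\tau\Pi'$ will in general meet $\inte (g^k)^*\Pi'$ in a two-dimensional set, violating the disjoint-interiors condition for the full group. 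This is precisely where the bulk of the paper's proof of Theorem \ref{thm:BirInfiniteKM} lies: it symmetrizes, taking $z_1=x+\tau x$ on the fixed line of $\tau$ and $z_2=z_1+fz_1$ on the fixed line of $\theta=f\tau$ (using Lemma \ref{lemma:squaredMinus} to get the relations \eqref{eq:rays}), and then verifies that $\Pi=\R_+z_1+\R_+z_2$ is a fundamental domain for the dihedral action. Your construction is repairable by the same device, but as written "the translates $(g^n)^*\Pi'$ tile the interior" does not prove the statement when $\mcal B^-(X)\neq\emptyset$.

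Two further points need justification. First, the conjecture concerns $\overline{\Mov}(X)\cap\Eff(X)$ (resp.\ $\Nef(X)\cap\Eff(X)$), whereas your tiling covers only the open cone; you must show no \emph{effective} class lies on the irrational boundary rays. Irrationality alone does not give this: the paper proves it in Lemma \ref{lem:BirInfiniteMov} and Corollary \ref{cor:AutInfinite}(2), first identifying $\overline{\Mov}(X)=\Effb(X)$ via Theorem \ref{thm:NefInsideBig}, then noting that an effective irrational $y_1$ would be a sum of at least two prime divisors, contradicting extremality of $m_1$. Second, in the case $\mcal A(X)$ finite and $\mcal B(X)$ infinite, your rationality-of-walls argument controls the \emph{interior} chamber walls, but does not rule out that a boundary ray of $\Nef(X)$ coincides with the irrational ray $m_i$ of $\overline{\Mov}(X)$ --- that ray is not a wall between chambers, so Theorem \ref{thm:NefInsideBig} does not apply there. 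Your phrase "only accumulated by the chambers" is exactly what requires proof; the paper supplies it in Proposition \ref{proposition:birgroupBoundary}: if $\ell_1=m_1$, every $g\in\mcal B^+(X)$ fixes $\ell_1$, so $g\Nef(X)$ meets $\inte\Nef(X)$ and $g\in\mcal A(X)$ by \cite[Lemma 1.5]{Kaw97}, forcing $\mcal A^+(X)=\mcal B^+(X)$ and contradicting finiteness of $\mcal A(X)$. Your treatment of part (1), splitting along the rational fixed eigenline of the integral involution, is correct and matches Proposition \ref{proposition:weakKMconjecture}.
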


Oguiso in \cite{Og12} showed that there are indeed Calabi-Yau threefolds $X$ with $\rho(X)=2$ and with infinite $\Bir(X)$, as well as hyperk\"ahler $4$-folds $X$ with $\rho(X)=2$ and with infinite $\Aut(X)$. 

\section{Preliminaries}

In this section we give some basic definitions and gather results which we need in the paper. 

\vskip .2cm 
A {\it Calabi-Yau manifold} of dimension $n$ is a projective manifold $X$  with trivial canonical bundle $K_X \simeq \mathcal O_X$
such that $H^1(X,\mathcal O_X) = 0$. In particular, we do not require $X$ to be simply connected.

Let $N^1(X)$ be the Neron-Severi group, generated by the classes of the line bundles on $X$ and let $N^1(X)_\R$ be the corresponding real vector space in
$H^2(X,\mathbb R)$. As usual, ${\Nef}(X) \subseteq N^1(X)_\R$ denotes the closed cone of nef divisors, ${\B}(X)$ stands for the open cone of big divisors, $\overline{\Mov}(X)$ is the closure of the cone generated by mobile divisors (that is, effective divisors whose base locus does not contain divisors), and $\Mov(X)$ is its interior.  Finally, $\Eff(X)$ is the effective cone, and $\Effb(X)$ is the pseudo-effective cone (the closure of the effective cone, or equivalently, the closure of the big cone). 

\vskip .2cm 
On a normal $\mathbb Q$-factorial projective variety $X$ with terminal singularities and nef canonical class, ${\Aut}(X)$ denotes the automorphism group and ${\Bir}(X)$ the group of birational automorphisms. We obtain a natural homomorphism
$$ r\colon \Bir(X) \to {\GL}(N^1(X)) $$
given by $g\mapsto g^*$. 

\begin{nt}\label{notation}
Assume that a Calabi-Yau manifold $X$ has Picard number $\rho(X) = 2$. We let $\ell_1, \ell_2$ be the two boundary rays of ${\Nef}(X)$, and let $m_1,m_2$ be the boundary rays of $\overline{\Mov}(X)$. We fix non-trivial elements $x_i \in \ell_i$ and $y_i \in m_i$. We set 
$$\mathcal A(X) = r\big(\Aut(X)\big)\quad\text{and}\quad \mathcal B(X) = r\big(\Bir(X)\big).$$ 
\end{nt} 

It is well-known, see for instance \cite[Proposition 2.4]{Og12}, that the group $\Bir(X)$ is finite if and only if $\mathcal B(X)$ is, and similarly for $\Aut(X)$ and $\mathcal A(X)$.  

\vskip .2cm 
Recall also the following result \cite[Proposition 3.1]{Og12}.

\begin{pro}\label{proposition:Oguiso}
Let $X$ be a Calabi-Yau manifold  of dimension $n$ such that $\rho(X) = 2$.   
\begin{enumerate} 
\item  If $n$ is odd, or if one of the $\ell_i$ is rational, then every non-trivial element of $\mathcal A(X)$ has order $2$. 
\item If one of the $m_i$ is rational, then every non-trivial element of  $\mathcal B(X) $  has order $2$. 
\end{enumerate} 
\end{pro}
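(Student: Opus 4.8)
The plan is to use that $\rho(X)=2$ forces $N^1(X)$ to be a rank-two lattice, so upon fixing a basis the homomorphism $r$ identifies $\GL(N^1(X))$ with $\GL_2(\Z)$, and both $\mathcal{A}(X)$ and $\mathcal{B}(X)$ become groups of integral matrices acting on a two-dimensional cone with exactly two boundary rays. The first step is the basic dichotomy: every $g\in\Aut(X)$ maps $\Nef(X)$ to itself, hence $g^*$ permutes the two boundary rays $\ell_1,\ell_2$, and likewise every $g\in\Bir(X)$ preserves $\overline{\Mov}(X)$, so $g^*$ permutes $m_1,m_2$. Thus for each relevant $g$ either $g^*$ fixes both boundary rays, or it interchanges them, and I would treat these two cases separately.

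The interchanging case requires no hypothesis at all. Working with the nef cone for definiteness, suppose $g^*x_1=ax_2$ and $g^*x_2=bx_1$; here $a,b>0$ since $g^*$ carries the nef ray $\ell_1$ onto the nef ray $\ell_2$ and conversely. Then $(g^*)^2x_i=ab\,x_i$ for $i=1,2$, so $(g^*)^2=ab\cdot\id$ is a scalar matrix in $\GL_2(\Z)$; taking determinants gives $(ab)^2=1$, whence $ab=1$ and $(g^*)^2=\id$. Since $g^*$ swaps the rays it is nontrivial, so it has order exactly $2$. In the fixing case, instead, $x_1,x_2$ are eigenvectors, $g^*x_i=\lambda_ix_i$ with $\lambda_i>0$, and $\lambda_1\lambda_2=\det g^*=1$; integrality of the trace forces $\lambda_1+\lambda_1^{-1}\in\Z$, so either $\lambda_1=1$ and $g^*=\id$, or $\lambda_1\neq1$ is a real quadratic irrationality, the eigenrays are irrational, and $g^*$ has infinite order. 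The content of the proposition is precisely that the hypotheses rule out this last, infinite-order possibility.

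To finish I would feed in the hypotheses. If one of $\ell_i$ (respectively $m_i$) is rational, choose the corresponding $x_i$ (respectively $y_i$) to be a primitive integral vector; in the fixing case $g^*x_i=\lambda_ix_i$ then equates two primitive integral vectors (note $g^*x_i$ is integral and primitive because $g^*$ is a lattice automorphism), forcing $\lambda_i=1$, hence $\lambda_j=\lambda_i^{-1}=1$ and $g^*=\id$. Consequently every nontrivial element is of interchanging type and has order $2$; this proves part (2) and the rationality half of part (1). For the parity hypothesis in part (1), I would use that an \emph{automorphism} preserves top self-intersection numbers, so the degree-$n$ form $q(D)=D^n$ on $N^1(X)_\R$ satisfies $q\circ g^*=q$. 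Expanding $D=ax_1+bx_2$ in the eigenbasis as $q(a,b)=\sum_{k=0}^nc_ka^kb^{n-k}$ and using $g^*=\mathrm{diag}(\lambda,\lambda^{-1})$, invariance yields $c_k\lambda^{2k-n}=c_k$ for every $k$; as $\lambda>0$ and $\lambda\neq1$, this forces $c_k=0$ unless $2k=n$. When $n$ is odd no such $k$ exists, so $q\equiv0$, contradicting $A^n>0$ for an ample class $A$. Hence the fixing case again collapses to $g^*=\id$, and every nontrivial element of $\mathcal{A}(X)$ has order $2$.

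The one genuine obstacle is conceptual rather than computational: it is the reason the degree-$n$ argument applies to $\mathcal{A}(X)$ but not to $\mathcal{B}(X)$. An automorphism preserves $D^n$, whereas a birational automorphism is only an isomorphism in codimension one and can alter top self-intersection numbers (for instance under a flop), so the form $q$ is \emph{not} $\mathcal{B}(X)$-invariant. This is exactly why part (2) cannot borrow the parity argument and must rest on the rationality of a boundary ray of $\overline{\Mov}(X)$; keeping this geometric distinction straight, and correctly justifying that $\Bir(X)$ acts on the movable cone permuting its two rays, is the part that needs the most care.
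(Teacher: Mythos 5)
Your proposal is correct and takes essentially the route underlying the paper: the paper itself only cites Oguiso for this proposition, but your ray-swapping computation ($g^2=ab\cdot\id$, $\det$ forces $ab=1$) is exactly Lemma~\ref{lemma:squaredMinus}, and your invariance argument for the form $D\mapsto D^n$ killing $x_1^k\cdot x_2^{n-k}$ unless $2k=n$ is exactly Lemma~\ref{easylemma}, which the paper presents as the simplified version of Oguiso's calculation. The remaining ingredients (primitivity forcing $\lambda_i=1$ when a boundary ray is rational, and the observation that $D^n$ is only $\Aut$-invariant, not $\Bir$-invariant, which is why part (2) needs a rational ray of $\overline{\Mov}(X)$) are also the intended ones, so there are no gaps.
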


As a consequence, by using Burnside's theorem, Oguiso obtains:

\begin{thm}\label{thm:Oguiso} 
Let $X$ be a Calabi-Yau manifold of dimension $n$ such that $\rho(X) = 2$.   
\begin{enumerate} 
\item If $n$ is odd, then ${\Aut}(X)$ is finite. 
\item If $n$ is even and one of the rays $\ell_i$ is rational, then ${\Aut}(X)$ is finite.
\item If one of the rays $m_i$ is rational, then ${\Bir}(X)$ is finite. 
\end{enumerate}
\end{thm}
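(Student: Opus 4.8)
The plan is to derive all three statements as immediate consequences of Proposition~\ref{proposition:Oguiso}, the only additional ingredient being the classical fact that a linear group of bounded exponent is finite (Burnside's theorem).

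First I would fix the relevant group: in parts (1) and (2) I work with $G=\mathcal A(X)$, and in part (3) with $G=\mathcal B(X)$. By Notation~\ref{notation} and the definition of $r$, the group $G$ is a subgroup of $\GL(N^1(X))\cong\GL_2(\Z)$, since $\rho(X)=2$. The hypotheses of the theorem are exactly those under which Proposition~\ref{proposition:Oguiso} applies: if $n$ is odd or some $\ell_i$ is rational, then every non-trivial element of $\mathcal A(X)$ has order $2$, while if some $m_i$ is rational, then every non-trivial element of $\mathcal B(X)$ has order $2$. In each case this says that $g^2=\id$ for every $g\in G$, i.e.\ $G$ has exponent dividing $2$.

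Next I would conclude that $G$ is finite. By Burnside's theorem, any subgroup of $\GL_n(\mathbb C)$ in which the orders of the elements are bounded is finite; applying this to $G\subseteq\GL_2(\Z)\hookrightarrow\GL_2(\mathbb C)$ gives $|G|<\infty$. In the present situation one may even avoid the general theorem: a group of exponent $2$ is automatically abelian, its elements are involutions and hence diagonalizable over $\mathbb C$, and commuting diagonalizable matrices are simultaneously diagonalizable with diagonal entries in $\{\pm1\}$; for $2\times2$ matrices this already forces $|G|\le4$.

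Finally I would pass from the image back to the group itself. By the remark recorded just before Proposition~\ref{proposition:Oguiso} (see \cite[Proposition 2.4]{Og12}), the group $\Aut(X)$ is finite if and only if $\mathcal A(X)$ is, and $\Bir(X)$ is finite if and only if $\mathcal B(X)$ is. This yields finiteness of $\Aut(X)$ in cases (1) and (2) and of $\Bir(X)$ in case (3). Since the finiteness of a bounded-exponent linear group is elementary, I do not expect a genuine obstacle here; the entire content of the theorem is already packaged into Proposition~\ref{proposition:Oguiso}, and the only point needing care is matching each hypothesis to the correct clause of that proposition and to the correct group $\mathcal A(X)$ or $\mathcal B(X)$.
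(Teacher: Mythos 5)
Your proposal is correct and follows exactly the route the paper takes: it states that Theorem~\ref{thm:Oguiso} is obtained from Proposition~\ref{proposition:Oguiso} ``by using Burnside's theorem,'' combined with the fact (recorded before Proposition~\ref{proposition:Oguiso}, via \cite[Proposition 2.4]{Og12}) that $\Aut(X)$ resp.\ $\Bir(X)$ is finite if and only if $\mathcal A(X)$ resp.\ $\mathcal B(X)$ is. Your added elementary observation --- that a group of exponent $2$ in $\GL_2(\Z)$ is abelian, simultaneously diagonalizable with eigenvalues $\pm1$, hence of order at most $4$ --- is a valid shortcut bypassing Burnside, and is consistent with the sharper bound $|\mathcal A(X)|\leq 2$ proved later in Proposition~\ref{automorphism}.
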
 

Proposition \ref{automorphism} below makes this result more precise. In contrast to Theorem \ref{thm:Oguiso}, Oguiso constructed an example of Calabi-Yau manifold with $\rho(X) = 2$ such that ${\Bir}(X)$ is infinite. In this example both rays $m_i$ are irrational, and we recall it in Example \ref{exampleOguiso}. 

\vskip .2cm 
If $g$ is any element of $\mcal B(X)$, then $\det g=\pm1$ since $g$ acts on the integral lattice $N^1(X)$. We introduce the notations
$$\mcal A^+(X)=\{g\in\mcal A(X)\mid\det g=1\}$$ 
and 
$$\mcal A^-(X)=\{g\in\mcal A(X)\mid\det g={-}1\};$$ 
and similarly $\mcal B^+(X)$ and $\mcal B^-(X)$.  Note that each $g\in\mcal A(X)$ restricts to an action on the set $\ell_1\cup\ell_2$, and each $g\in\mcal B(X)$ restricts to an action on the set $m_1\cup m_2$. Moreover, since the cone $\Effb(X)$ does not contain lines, this ``restricted" action completely determines $g$. Additionally, each $g \in \mcal A(X)$ is completely determined by $gx_1$ since $\det g=\pm1$. Similarly, each $g\in\mcal B(X)$ is completely determined by $gy_1$.

\vskip .2cm 
We frequently and without explicit mention use the following well-known lemma, see for instance \cite[Lemma 1.5]{Kaw97}.

\begin{lem}
Let $X$ be a Calabi-Yau manifold. Then $g\in\Bir(X)$ is an automorphism if and only if there exists an ample divisor $H$ on $X$ such that $g^*H$ is ample.
\end{lem}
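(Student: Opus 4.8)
The forward implication is immediate: if $g$ is an automorphism then $g^*$ preserves ampleness, so $g^*H$ is ample for every ample $H$. All the content lies in the converse, and my plan is to upgrade the birational map $g$ to a genuine isomorphism by a negativity argument on a resolution of its graph. First I would choose a smooth variety $W$ together with birational morphisms $p,q\colon W\to X$ such that $g=q\circ p^{-1}$. The decisive role of the Calabi-Yau hypothesis enters here: since $K_X\simeq\OO_X$ and $X$ is smooth, writing $K_W=p^*K_X+E_p=E_p$ and $K_W=q^*K_X+E_q=E_q$ with $E_p,E_q$ effective and exceptional shows $E_p=E_q=K_W$. Hence the $p$-exceptional and the $q$-exceptional prime divisors coincide, so $g$ contracts no divisor and neither does $g^{-1}$; that is, $g$ is an isomorphism in codimension one. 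Consequently $g^*\colon N^1(X)\to N^1(X)$ is a well-defined isomorphism with $(g^{-1})^*=(g^*)^{-1}$, a functoriality that will be used crucially below.

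Next I would set $L=g^*H$, which is ample by assumption, and compare the two nef-and-big classes $q^*H$ and $p^*L$ on $W$. Their difference $E'=q^*H-p^*L$ satisfies $p_*E'=g^*H-L=0$ and, using the functoriality above, $q_*E'=H-(g^{-1})^*L=H-H=0$; thus $E'$ is simultaneously $p$-exceptional and $q$-exceptional. Since $E'$ is $p$-nef (as $q^*H$ is nef and $p^*L$ is trivial on $p$-fibres) and $-E'$ is $q$-nef (as $p^*L$ is nef and $q^*H$ is trivial on $q$-fibres), the negativity lemma applied to $p$ and to $q$ yields $E'\le0$ and $E'\ge0$ respectively, whence $E'=0$ and $q^*H=p^*L$. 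Because $H$ and $L$ are ample, a curve $C$ in $W$ satisfies $q^*H\cdot C=0$ exactly when $C$ is $q$-contracted, and $p^*L\cdot C=0$ exactly when $C$ is $p$-contracted; the equality of classes therefore forces $p$ and $q$ to contract the same curves. Every fibre of $p$ is then mapped by $q$ to a point, so by the rigidity lemma $q$ factors as $q=h\circ p$ for a morphism $h\colon X\to X$, giving $g=h\in\Aut(X)$; the inverse of $h$ is the morphism obtained by the symmetric argument for $g^{-1}$, so $g$ is an automorphism.

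The step I expect to be the genuine obstacle, and the only place where the Calabi-Yau condition is indispensable, is establishing that $g$ is an isomorphism in codimension one. Without it the functoriality $(g^{-1})^*g^*=\id$ breaks down, and with it the whole argument: for the standard Cremona involution of $\PS^2$ one has $g^*\OO(1)=\OO(2)$ ample even though $g\notin\Aut(\PS^2)$, precisely because there the $p$- and $q$-exceptional divisors differ. Once isomorphism in codimension one is secured, the remaining negativity and rigidity arguments are formal.
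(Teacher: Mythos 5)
The paper does not actually prove this lemma --- it quotes it as well known from \cite[Lemma 1.5]{Kaw97} --- and your argument is correct and is essentially that standard proof: common resolution $p,q\colon W\to X$, isomorphism in codimension one via the discrepancy divisors (this is where $K_X\simeq\OO_X$ enters), the negativity lemma applied to $\pm(q^*H-p^*L)$ to force $q^*H=p^*L$, and rigidity to descend $g$ to a morphism. The only step to tighten is the assertion $E_p=E_q$: since $K_W$ is just a linear-equivalence class you only get $E_p\sim E_q$ a priori, but as both are effective and exceptional one has $p_*\OO_W(E_p)=\OO_X$, hence $h^0\bigl(W,\OO_W(E_p)\bigr)=h^0(X,\OO_X)=1$, so the canonical class of $W$ has a unique effective member and $E_p=E_q$ as divisors (alternatively, run the negativity lemma once more on $\pm(E_p-E_q)$).
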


We also use the following result \cite[Theorem 5.7]{Kaw88}, \cite[Corollary 2.7]{Kaw97}, \cite[Theorem 3.8]{KKL12}:

\begin{thm}\label{thm:NefInsideBig}
Let $X$ be a Calabi-Yau manifold. Then the cones $\Nef(X)$ and $\overline{\Mov}(X)$ are locally rational polyhedral in ${\B}(X)$. 
\end{thm}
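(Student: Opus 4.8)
The plan is to handle the two cones separately and, in each case, to reduce the global assertion to a statement about walls. Since local rational polyhedrality is automatic at interior points (there the cone locally equals all of $N^1(X)_\R$), it suffices to show that \emph{within} $\B(X)$ the boundary $\partial\Nef(X)$ (respectively $\partial\overline{\Mov}(X)$) is supported by a locally finite family of rational hyperplanes: once this is known, every big class has a neighborhood meeting only finitely many of these hyperplanes, which is exactly local rational polyhedrality. By homogeneity it is enough to produce, around each big class $\alpha$, finitely many rational supporting walls; and since the conclusion is open, one may first locate these walls using nearby rational big classes and then note that they also bound a neighborhood of $\alpha$ itself.

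For the nef cone I would argue via the Cone and Contraction theorems. A codimension-one face of $\Nef(X)$ through a big boundary class $\alpha$ is dual to an extremal face $F=\alpha^{\perp}\cap\overline{NE}(X)$ of the Mori cone. Writing $\alpha\equiv A+E$ with $A$ ample and $E\geq0$, which is possible because $\alpha$ is big, every nonzero curve class $z\in F$ satisfies $A\cdot z=-E\cdot z$, hence $E\cdot z<0$; so the curves generating $F$ are contained in $\operatorname{Supp}(E)$. The point is to upgrade this to genuine local finiteness and rationality of the supporting hyperplanes. Since $K_X\equiv0$ the naive Cone theorem yields nothing, so the device is to apply the theory to the pair $(X,\varepsilon E)$, i.e.\ to run a $(K_X+\varepsilon E)$-MMP near $\alpha$ for small $\varepsilon>0$: the contractions bounding $\Nef(X)$ in a neighborhood of $\alpha$ then become $(K_X+\varepsilon E)$-negative, hence are finite in number and supported on rational hyperplanes by the Cone and Base-Point-Free theorems. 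This produces a rational polyhedral neighborhood of $\alpha$ in $\Nef(X)$, as in \cite{Kaw97}.

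For the movable cone I would reduce to the nef case on birational models. Using finite generation of the relevant section rings in the big cone, a big movable class $\alpha$ becomes semiample on a small $\mathbb{Q}$-factorial modification $\varphi\colon X\dashrightarrow X'$, and the walls of $\overline{\Mov}(X)$ near $\alpha$ correspond either to such flops or to walls of $\Nef(X')$. Pulling back the local rational polyhedrality of $\Nef(X')$ along $\varphi$, which is an isomorphism in codimension one and therefore preserves $N^1$ rationally, and using that only finitely many marked minimal models meet a fixed neighborhood of $\alpha$, gives the claim for $\overline{\Mov}(X)$; this is the geography-of-models mechanism of \cite{KKL12}.

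The main obstacle is precisely this local finiteness: a priori infinitely many extremal contractions, or infinitely many small modifications in the movable case, could accumulate at the big class $\alpha$. This is where the hypotheses are genuinely used, namely that triviality of $K_X$ together with bigness lets one perturb into the klt, $K+{}$ample regime, and that the resulting chamber decomposition is finite as a consequence of finite generation of adjoint- or Cox-type rings restricted to $\B(X)$. I expect this finiteness step, rather than the rationality of any individual wall, to be the crux of the argument.
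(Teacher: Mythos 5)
This theorem is not proved in the paper at all: it is quoted from the literature, with pointers to \cite[Theorem 5.7]{Kaw88}, \cite[Corollary 2.7]{Kaw97} and \cite[Theorem 3.8]{KKL12}. Your sketch correctly reconstructs exactly the arguments of those cited sources --- Kawamata's device of perturbing the trivial canonical class by a small effective divisor $\varepsilon E$ extracted from bigness, so that the Cone and Base-point-free theorems for the klt pair $(X,\varepsilon E)$ produce finitely many rational supporting walls of $\Nef(X)$ near a big boundary class, and the finite-generation/geography-of-models mechanism of \cite{KKL12} expressing $\overline{\Mov}(X)$ near a big class as a finite union of pullbacks of nef cones of small $\mathbb{Q}$-factorial modifications --- so the proposal is correct in outline and follows essentially the same route as the proofs the paper relies on.
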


\section{Calculating $\Aut(X)$ and $\Bir(X)$}
In this section we calculate explicitly the groups $\mcal A(X)$ and $\mcal B(X)$ on a Calabi-Yau manifold with Picard number $2$. We start with some elementary observations. 

\begin{lem}\label{lemma:squaredMinus}
Let $X$ be a Calabi-Yau manifold such that $\rho(X)=2$. If $g\in\mathcal B^-(X)$, then $g^2=\id$. 
\end{lem}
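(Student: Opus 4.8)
The plan is to exploit the action of $g$ on the two boundary rays $m_1,m_2$ of $\overline{\Mov}(X)$ together with the sign of the determinant. Since $g\in\mcal B(X)$, it maps $\overline{\Mov}(X)$ to itself and therefore permutes the set $\{m_1,m_2\}$, as recorded in the preliminaries. Hence there are only two possibilities: either $g$ fixes each ray $m_i$, or $g$ interchanges them.

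First I would rule out the case where $g$ fixes both rays. If $g m_i=m_i$ for $i=1,2$, then because $g$ preserves the cone $\overline{\Mov}(X)$ and $y_i$ spans $m_i$, we must have $g y_i=\lambda_i y_i$ with $\lambda_i>0$. As $y_1,y_2$ are linearly independent, being the extremal generators of a full-dimensional cone in the plane $N^1(X)_\R$, the map $g$ is diagonal in the basis $\{y_1,y_2\}$ with positive entries, whence $\det g=\lambda_1\lambda_2>0$. This contradicts $g\in\mcal B^-(X)$, that is, $\det g=-1$. Consequently $g$ must swap the two rays.

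So I may write $g y_1=\alpha y_2$ and $g y_2=\beta y_1$ with $\alpha,\beta>0$, again using that $g$ preserves the cone. Then $g^2 y_1=\alpha\beta\, y_1$ and $g^2 y_2=\alpha\beta\, y_2$, so $g^2=\alpha\beta\cdot\id$ on $N^1(X)_\R$. Taking determinants gives $(\alpha\beta)^2=\det(g^2)=(\det g)^2=1$, and since $\alpha\beta>0$ this forces $\alpha\beta=1$. Therefore $g^2=\id$.

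The argument is essentially formal once this framework is in place, and I do not expect a genuinely hard step. The only points requiring care are the positivity of the scalars $\lambda_i$ (respectively $\alpha,\beta$), which comes from the fact that $g$ preserves the cone $\overline{\Mov}(X)$ and not merely the unordered pair of rays, and the observation that $\{y_1,y_2\}$ forms a basis of $N^1(X)_\R$, so that an endomorphism acting by a common scalar on both $y_i$ must be that scalar times $\id$.
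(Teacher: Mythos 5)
Your proof is correct and takes essentially the same route as the paper: the paper's proof starts directly from $gy_1=\alpha y_2$, $gy_2=\beta y_1$ (treating the ray swap as immediate from $\det g=-1$, which you spell out), and then concludes $\alpha\beta=1$ from $\det(g^2)=(\alpha\beta)^2=1$ exactly as you do. Your added justification that $g$ cannot fix both rays $m_i$ (else $\det g>0$) is a harmless elaboration of a step the paper leaves implicit.
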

\begin{proof}
By assumption there exist $\alpha>0$ and $\beta>0$ such that $gy_1=\alpha y_2$ and $gy_2=\beta y_1$. But then $g^2y_1=\alpha\beta y_1$ and $g^2y_2=\alpha\beta y_2$, and we have $g^2\in\mcal A^+(X)$. Therefore $\det(g^2)=(\alpha\beta)^2=1$, so $\alpha\beta=1$. Thus, $g^2$ is the identity. 
\end{proof}

\begin{lem}\label{lemma:plusminus}
Let $X$ be a Calabi-Yau manifold such that $\rho(X)=2$. Then $\mcal B^-(X)=\mcal B^+(X)g$ for any $g\in\mcal B^-(X)$. Similarly, $\mcal A^-(X) = \mcal A^+(X)h $ for any $h \in \mcal A^-(X)$. 

In particular, if $\mcal B(X)$ is infinite, so is $\mcal B^+(X)$; and if $\mcal A(X)$ is infinite, so is $\mcal A^+(X)$. 
\end{lem}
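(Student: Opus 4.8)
The plan is to recognize that the determinant provides a group homomorphism and to read off the coset structure it induces; the entire statement is then formal group theory. First I would observe that $\det\colon\mcal B(X)\to\{\pm1\}$ is a homomorphism into the multiplicative group $\{\pm1\}$, since the determinant is multiplicative and each element of $\mcal B(X)$ preserves the integral lattice $N^1(X)$ and hence has determinant $\pm1$. Its kernel is by definition $\mcal B^+(X)$, which is therefore a subgroup, while $\mcal B^-(X)$ is the preimage of $-1$.

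Next, assuming $\mcal B^-(X)\neq\emptyset$ and fixing $g\in\mcal B^-(X)$, I would prove the two inclusions giving $\mcal B^-(X)=\mcal B^+(X)g$. For the inclusion $\mcal B^+(X)g\subseteq\mcal B^-(X)$, any $h\in\mcal B^+(X)$ satisfies $\det(hg)=\det(h)\det(g)=-1$, so $hg\in\mcal B^-(X)$. For the reverse inclusion, any $f\in\mcal B^-(X)$ satisfies $\det(fg^{-1})=1$, so $fg^{-1}\in\mcal B^+(X)$ and thus $f=(fg^{-1})g\in\mcal B^+(X)g$. The same argument, word for word, handles $\mcal A^-(X)=\mcal A^+(X)h$ for $h\in\mcal A^-(X)$.

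Finally, for the ``in particular'' statement, I would note that right multiplication by $g$ is a bijection $\mcal B^+(X)\to\mcal B^-(X)$, so the two cosets have equal cardinality, and the disjoint decomposition $\mcal B(X)=\mcal B^+(X)\cup\mcal B^-(X)$ yields $|\mcal B(X)|\le2\,|\mcal B^+(X)|$; hence $\mcal B(X)$ infinite forces $\mcal B^+(X)$ infinite. If instead $\mcal B^-(X)=\emptyset$, then $\mcal B(X)=\mcal B^+(X)$ and the conclusion is immediate. The identical reasoning applies to $\mcal A(X)$ and $\mcal A^+(X)$.

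There is essentially no obstacle here: the whole content is the elementary fact that the sign of the determinant realizes $\mcal B^+(X)$ as a subgroup of index at most two, with $\mcal B^-(X)$ as its (possibly empty) nontrivial coset. The only point requiring a word of care is the degenerate case in which $\mcal B^-(X)$ or $\mcal A^-(X)$ is empty, where the coset identity holds vacuously but the cardinality conclusion still needs the separate one-line observation above.
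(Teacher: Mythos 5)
Your proof is correct, and its core is the same coset decomposition the paper uses, but with one genuine simplification worth noting. The paper proves $\mcal B^-(X)=\mcal B^+(X)g$ by taking $g,g'\in\mcal B^-(X)$, setting $f=g'g\in\mcal B^+(X)$, and then writing $g'=fg$ \emph{via the involution property} $g^2=\id$ --- a geometric fact about Calabi--Yau manifolds with $\rho(X)=2$ established in Lemma~\ref{lemma:squaredMinus} (the paper cites Proposition~\ref{proposition:Oguiso} at this point, a slight citation slip). You instead write $g'=(g'g^{-1})g$ and use only that $\det\colon\mcal B(X)\to\{\pm1\}$ is a homomorphism with kernel $\mcal B^+(X)$, so your argument is pure group theory: it holds for any subgroup of $\GL(N^1(X))$ preserving the lattice, with no input from the geometry of $X$ at all. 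What the paper's route buys is a one-line computation using a fact already on hand; what yours buys is independence from Lemma~\ref{lemma:squaredMinus} and hence greater generality. You also make explicit two points the paper leaves implicit: the bijection $\mcal B^+(X)\to\mcal B^-(X)$ given by right multiplication by $g$, which yields the ``in particular'' cardinality statement, and the degenerate case $\mcal B^-(X)=\emptyset$, where $\mcal B(X)=\mcal B^+(X)$ directly. There are no gaps.
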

\begin{proof}
Let $g,g'\in\mathcal B^-(X)$. Then $g'g=f\in\mcal B^+(X)$, and since $g^2=\id$ by Proposition \ref{proposition:Oguiso}, we have $g'=fg\in\mcal B^+(X)g$. 
The proof in the case of automorphisms is identical.
\end{proof}

\begin{pro}\label{automorphism}
Let $X$ be a Calabi-Yau manifold such that $\rho(X)=2$. If $\mathcal A(X)$ is finite, then $|\mcal A^+(X)|=1$ and $|\mathcal A(X)|\leq2$. If $\mathcal B(X)$ is finite, then $|\mcal B^+(X)|=1$ and $|\mathcal B(X)|\leq2$. 

In particular, if $n$ is odd, or if one of the $\ell_i$ is rational, then $|\mathcal A(X)|\leq2$.
\end{pro}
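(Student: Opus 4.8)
The plan is to reduce everything to understanding the determinant-$1$ subgroups $\mcal A^+(X)$ and $\mcal B^+(X)$, since Lemma \ref{lemma:plusminus} will then control the full groups. I would first record the geometric meaning of the determinant for an element $g$ acting on the two-dimensional space $N^1(X)_\R$: since $g$ permutes the two boundary rays of $\overline{\Mov}(X)$, writing it in the basis $\{y_1,y_2\}$ (which is a basis because $\overline{\Mov}(X)$ is full-dimensional, so $y_1,y_2$ are linearly independent) shows that $g$ swaps $m_1$ and $m_2$ exactly when $\det g=-1$, and fixes each of $m_1,m_2$ exactly when $\det g=1$. Hence every $g\in\mcal B^+(X)$ satisfies $gy_1=\alpha y_1$ and $gy_2=\beta y_2$ for some $\alpha,\beta>0$ (positivity because $g$ preserves the cone and so sends each ray to itself with positive orientation), and taking determinants in this basis gives $\alpha\beta=1$.

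The heart of the argument is then the observation that such a $g$ can only have finite order if it is trivial. If $\mcal B(X)$ is finite, then $g$ has finite order $k$, so $g^k y_1=\alpha^k y_1=y_1$ forces $\alpha^k=1$; as $\alpha$ is a positive real number, this gives $\alpha=1$, and likewise $\beta=1$. Since $g$ then fixes the basis $\{y_1,y_2\}$, we conclude $g=\id$, so $|\mcal B^+(X)|=1$. Lemma \ref{lemma:plusminus} now finishes the bound on $|\mcal B(X)|$: either $\mcal B^-(X)$ is empty, or it equals a single coset $\mcal B^+(X)g=\{g\}$, so in all cases $|\mcal B(X)|\leq 2$. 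The argument for $\mcal A(X)$ is word-for-word the same, using the rays $\ell_1,\ell_2$ and the elements $x_1,x_2$ in place of $m_1,m_2,y_1,y_2$.

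For the final ``in particular'' assertion, I would simply combine this with Oguiso's finiteness statement: when $n$ is odd, or when one of the $\ell_i$ is rational, Theorem \ref{thm:Oguiso} guarantees that $\Aut(X)$, and hence $\mcal A(X)$, is finite, so the bound $|\mcal A(X)|\leq 2$ just established applies. I expect the only real subtlety to be the first step, namely pinning down that a determinant-$1$ element must fix each ray individually rather than swap them; once that is in place, the finite-order-forces-triviality step is immediate, since the eigenvalues $\alpha,1/\alpha$ are forced to be positive reals and the only positive real root of unity is $1$. Everything else is bookkeeping via Lemma \ref{lemma:plusminus}.
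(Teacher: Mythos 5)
Your proposal is correct and takes essentially the same route as the paper's proof: the identical eigenvalue argument (a finite-order $g\in\mcal A^+(X)$ or $\mcal B^+(X)$ scales a boundary ray by a positive real root of unity, forcing $\alpha=1$ and hence $g=\id$), followed by Lemma \ref{lemma:plusminus} to bound the full group and Theorem \ref{thm:Oguiso} for the ``in particular'' claim. The only difference is that you spell out the correspondence between $\det g=\pm1$ and fixing versus swapping the two extremal rays, which the paper treats as implicit in its preliminary discussion.
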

\begin{proof}
Assume that $\mcal A(X)$ is finite, and fix $g\in\mathcal A(X)$. If $g\in\mcal A^+(X)$, then there exists $\alpha>0$ such that $gx_1=\alpha x_1$. Then $g^m=\id$ for some positive integer $m$, hence $\alpha^m=1$, and therefore $\alpha=1$ and $\mcal A^+(X) =\{\id\}$. Now $|\mcal A(X)|\leq2$ by Lemma \ref{lemma:plusminus}. The proof for $\mcal B(X)$ is the same, and the last claim follows from Theorem \ref{thm:Oguiso}.
\end{proof}

Proposition \ref{automorphism} can also be directly deduced from the following elementary lemma, simplifying calculations in \cite{Og12}.

\begin{lem} \label{easylemma} 
Let $X$ be an $n$-dimensional Calabi-Yau manifold with $\rho(X) = 2$. Assume that $|\mcal A^+(X)| \neq 1 $. Then 
$$ x_1^m \cdot x_2^{n-m}  = 0$$
for all $m$ unless $n = 2m$.

If $n = 2m,$ then $x_1^m \ne 0$ and $x_2^m \ne 0$.
\end{lem}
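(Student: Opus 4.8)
The plan is to exploit a non-trivial element $g \in \mcal A^+(X)$, whose existence is guaranteed by the hypothesis $|\mcal A^+(X)| \ne 1$, together with the fact that the automorphism underlying $g$ preserves intersection numbers on $X$.

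First I would pin down the action of $g$ on the nef cone. Since $g$ comes from an automorphism, it maps $\Nef(X)$ to itself and hence permutes the two boundary rays $\ell_1, \ell_2$. If $g$ swapped $\ell_1$ and $\ell_2$, then in the basis $(x_1, x_2)$ it would be anti-diagonal and thus have negative determinant, contradicting $\det g = 1$. Therefore $g$ fixes each ray, so $gx_1 = \alpha x_1$ and $gx_2 = \delta x_2$ for some $\alpha, \delta > 0$ with $\alpha\delta = \det g = 1$. If $\alpha = 1$ then also $\delta = 1$, and $g$ would fix the spanning classes $x_1, x_2$, forcing $g = \id$; since $g$ is non-trivial we conclude $\alpha \ne 1$.

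For the first assertion, I would use that the automorphism underlying $g$ has degree one, so $g$ preserves all intersection numbers and acts as a ring homomorphism on cohomology. Applying $g$ to the number $x_1^m \cdot x_2^{n-m}$ gives
\[ x_1^m \cdot x_2^{n-m} = (gx_1)^m \cdot (gx_2)^{n-m} = \alpha^m \delta^{n-m}\, x_1^m \cdot x_2^{n-m} = \alpha^{2m-n}\, x_1^m \cdot x_2^{n-m}, \]
where the last equality uses $\delta = \alpha^{-1}$. Since $\alpha > 0$ and $\alpha \ne 1$, the factor $\alpha^{2m-n}$ differs from $1$ whenever $n \ne 2m$, and this forces $x_1^m \cdot x_2^{n-m} = 0$ in that case.

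For the second assertion, suppose $n = 2m$ and choose an ample class $H$. As $H$ lies in the interior of $\Nef(X)$, I can write $H = a x_1 + b x_2$ with $a, b > 0$. Expanding $H^n$ by the binomial theorem and discarding every term $x_1^k \cdot x_2^{n-k}$ with $k \ne m$, which vanishes by the first part, only the middle term survives:
\[ H^n = \binom{2m}{m} a^m b^m\, (x_1^m \cdot x_2^m). \]
Since $H$ is ample, $H^n > 0$, hence $x_1^m \cdot x_2^m \ne 0$; in particular neither $x_1^m$ nor $x_2^m$ can vanish as a class, which is exactly the claim. The only genuinely delicate point is the clean splitting of the action on the two rays, which rests on the determinant computation above; once $g$ is diagonalized, the rest follows directly from the $g$-invariance of intersection numbers and the positivity of $H^n$.
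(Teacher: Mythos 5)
Your proof is correct and follows essentially the same route as the paper's: diagonalize a non-trivial $g\in\mcal A^+(X)$ on the boundary rays with eigenvalues $\alpha,\alpha^{-1}$, use invariance of intersection numbers to force $x_1^m\cdot x_2^{n-m}=0$ when $n\neq 2m$, and evaluate the top self-intersection of an ample class for the case $n=2m$. The only difference is cosmetic: you justify in detail why $g$ fixes each ray and why $\alpha\neq 1$ (via the determinant), steps the paper takes for granted, and you use a general ample class $H=ax_1+bx_2$ where the paper uses $x_1+x_2$.
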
 

\begin{proof} 
Let $f$ be a non-trivial element in $\mathcal A^+$.  Then $fx_1 = \alpha x_1$ and $fx_2 = \alpha^{-1} x_2$ with $\alpha > 0$, $\alpha \ne 1$. 
Then
$$ (fx_1)^m \cdot (fx_2)^{n-m} = \alpha^{2m-n} x_1^m \cdot x_2^{n-m}. $$
On the other hand, 
$$ (fx_1)^m \cdot (fx_2)^{n-m} = x_1^m \cdot x_2^{n-m}, $$
hence $x_1^m \cdot x_2^{n-m} = 0 $ unless $n = 2m$. 

For the second statement, observe that $x_1 + x_2$ is an ample class,
hence 
$$ 0 < (x_1+x_2)^n = {n\choose m} x_1^m \cdot x_2^m, $$ 
and therefore the classes $x_i^m$ are non-zero. 
\end{proof}

\begin{cor}\label{cor:AutInfinite}
Let $X$ be a Calabi-Yau manifold of dimension $n$ such that $\rho(X)=2$. If the group $\Aut(X)$ is infinite, then the following holds.
\begin{enumerate}
\item  $n$ is even and the rays $\ell_i$ are irrational.
\item $\Nef(X)=\Effb(X)$, and $\Nef(X)\cap\Eff(X)=\Amp(X)$. 
\item $c_{n-1}(X)=0$ in $H^{2n-2}(X,\mathbb Q)$.
\end{enumerate}
\end{cor}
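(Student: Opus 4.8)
The plan is to deduce all three parts from the existence of a single nontrivial hyperbolic element of $\mcal A^+(X)$. Since $\Aut(X)$ is infinite, so is $\mcal A(X)$, and hence $\mcal A^+(X)$ is infinite by Lemma \ref{lemma:plusminus}; fix a nontrivial $f\in\mcal A^+(X)$. As in the proof of Lemma \ref{easylemma}, $f$ fixes each of the rays $\ell_1,\ell_2$ and acts by $fx_1=\alpha x_1$ and $fx_2=\alpha^{-1}x_2$ with $\alpha>0$ and $\alpha\neq1$; thus $f$ is a hyperbolic automorphism of $N^1(X)_\R\cong\R^2$ whose only invariant rays are $\ell_1$ and $\ell_2$. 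Part (1) is then immediate: by Theorem \ref{thm:Oguiso} an infinite $\Aut(X)$ is impossible when $n$ is odd or when some $\ell_i$ is rational, so $n$ must be even and both $\ell_i$ must be irrational.

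For (2) I would first prove $\Effb(X)=\Nef(X)$ by a dynamical argument. The cone $\Effb(X)$ is invariant under $\mcal A(X)$, in particular under $f$; it is closed, convex, of full dimension (it contains the ample cone), and contains no line (if $D$ and $-D$ are both pseudoeffective then $D\equiv 0$), so it is bounded by exactly two extremal rays $e_1,e_2$, which $f$ permutes. Each $e_i$ then has finite $\langle f\rangle$-orbit, so $f^{k}e_i=e_i$ for some $k\geq1$; but $f^{k}$ is again hyperbolic with the same eigenrays, whence $e_i\in\{\ell_1,\ell_2\}$. Therefore $\{e_1,e_2\}=\{\ell_1,\ell_2\}$, and since both $\Nef(X)$ and $\Effb(X)$ are the closed convex cone generated by $\ell_1$ and $\ell_2$, they coincide. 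For the second equality, note $\Eff(X)\subseteq\Effb(X)=\Nef(X)$, so $\Nef(X)\cap\Eff(X)=\Eff(X)$. Any ample $\R$-class is a positive combination of effective $\Q$-divisors, giving $\Amp(X)\subseteq\Eff(X)$; conversely, if a nonzero effective class lies on $\ell_i$, extremality of $\ell_i$ in $\Effb(X)$ forces it to be a positive combination of integral effective classes lying on $\ell_i$, which is impossible since $\ell_i$ is irrational by (1). Hence $\Eff(X)$ meets the boundary of $\Nef(X)$ only at the origin, so $\Eff(X)=\Amp(X)$.

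For (3) I would combine an eigenvalue computation with hard Lefschetz. Writing $f=\tilde f^{\,*}$ for an automorphism $\tilde f$ of $X$, functoriality of Chern classes gives $\tilde f^{\,*}c_{n-1}(X)=c_{n-1}(X)$, while $\int_X\tilde f^{\,*}\omega=\int_X\omega$; applied to $\omega=c_{n-1}(X)\cup x_i$ this yields $\int_X c_{n-1}(X)\cup x_i=\alpha^{\pm1}\int_X c_{n-1}(X)\cup x_i$, so $c_{n-1}(X)\cdot x_i=0$ for $i=1,2$ because $\alpha\neq1$. As $x_1,x_2$ form a basis of $N^1(X)_\R$, we get $c_{n-1}(X)\cdot D=0$ for every $D\in N^1(X)$. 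To upgrade this to vanishing of the class, fix a rational ample class $A$. Since $c_{n-1}(X)$ is a rational class of Hodge type $(n-1,n-1)$, hard Lefschetz (the isomorphism $\cup A^{n-2}\colon H^2(X,\Q)\to H^{2n-2}(X,\Q)$) together with the Lefschetz $(1,1)$-theorem provides $\eta\in N^1(X)_\Q$ with $c_{n-1}(X)=A^{n-2}\cup\eta$. Then $0=c_{n-1}(X)\cdot D=\int_X\eta\cup D\cup A^{n-2}$ for all $D\in N^1(X)$, and the Hodge index theorem (non-degeneracy, indeed signature $(1,\rho-1)$, of the form $(\eta,D)\mapsto\int_X\eta\cdot D\cdot A^{n-2}$ on $N^1(X)_\R$) forces $\eta=0$, hence $c_{n-1}(X)=0$.

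I expect the last step of (3) to be the main obstacle: the eigenvalue argument controls only the intersection numbers $c_{n-1}(X)\cdot D$ for $D\in N^1(X)$, and a priori $c_{n-1}(X)$ could pair nontrivially with transcendental $(1,1)$-classes lying outside $N^1(X)$. The feature that rescues the argument is that $c_{n-1}(X)$ is itself a \emph{rational} class of type $(n-1,n-1)$, so hard Lefschetz and the Lefschetz $(1,1)$-theorem place it in $A^{n-2}\cup N^1(X)_\Q$; this reduces vanishing of the class to vanishing of finitely many intersection numbers against $N^1(X)$, which the Hodge index theorem then delivers. Along the way I would double-check the determinant bookkeeping ensuring that $f$ fixes rather than swaps the rays $\ell_i$ (so that the eigenvalues $\alpha^{\pm1}$ are positive), and that the boundary rays of $\Effb(X)$ are genuinely extremal, since the dynamical argument in (2) relies on both points.
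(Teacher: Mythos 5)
Your proof is correct, and it differs from the paper's at two points worth recording. For part (1) you and the paper do the same thing (cite Theorem \ref{thm:Oguiso}), and your proof of $\Nef(X)\cap\Eff(X)=\Amp(X)$ is the paper's argument (an effective class on an irrational extremal ray of $\Effb(X)$ would force integral effective classes onto that ray). But for $\Nef(X)=\Effb(X)$ the paper argues via Theorem \ref{thm:NefInsideBig}: if the cones differed, a boundary ray of $\Nef(X)$ would lie in the big cone, hence be rational, contradicting (1). Your dynamical argument --- the two extremal rays of $\Effb(X)$ are permuted by the hyperbolic element $f$, hence fixed by $f^2$, hence are eigenrays --- avoids the local rational polyhedrality theorem entirely and is more elementary; one small point to tighten is that a hyperbolic map with positive eigenvalues also fixes the rays $-\ell_1,-\ell_2$, so you should add that an extremal ray of $\Effb(X)$ equal to $-\ell_i$ would put a line in $\Effb(X)$ (since $x_i\in\Nef(X)\subseteq\Effb(X)$), which the no-line property you already stated rules out. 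For (3), the paper performs the same eigenvalue computation $\alpha\, x_i\cdot c_{n-1}(X)=x_i\cdot c_{n-1}(X)$ but then concludes $c_{n-1}(X)=0$ in $H^{2n-2}(X,\Q)$ simply ``as $\{x_1,x_2\}$ is a basis of $N^1_\R(X)$''. As you correctly observe, by itself this only shows that $c_{n-1}(X)$ pairs to zero with $N^1(X)_\R$, which is a priori weaker when $h^{1,1}(X)>\rho(X)=2$ (and the paper's definition of Calabi--Yau only assumes $H^1(X,\OO_X)=0$, so this can happen). Your completion --- hard Lefschetz respects the Hodge decomposition, so $c_{n-1}(X)=A^{n-2}\cup\eta$ with $\eta$ rational of type $(1,1)$, hence $\eta\in N^1(X)_\Q$ by Lefschetz $(1,1)$, and then the signature $(1,\rho-1)$ nondegeneracy of $(\eta,D)\mapsto\eta\cdot D\cdot A^{n-2}$ on $N^1(X)_\R$ forces $\eta=0$ --- is a correct and genuinely valuable rigorization of a step the paper leaves implicit. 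In sum: same skeleton, but with a self-contained replacement for the appeal to Theorem \ref{thm:NefInsideBig} in (2), and a more careful final step in (3) than the paper itself provides.
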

\begin{proof}
Claim (1) is Oguiso's Theorem 2.3. 

For the first part of (2), if $\Nef(X)\neq\Effb(X)$, then at least one boundary ray of $\Nef(X)$ is rational by Theorem \ref{thm:NefInsideBig}. This contradicts (1). For the second part of (2), without loss of 
generality it suffices to show that $x_1$ is not effective. Otherwise, we can write $x_1=\sum\delta_j D_j\geq0$ as a sum of at least two prime divisors, since $x_1$ is irrational. But then $\ell_1$ is not an extremal 
ray of the cone $\Nef(X)=\Effb(X)$, a contradiction.

For (3), note that $|\mcal A^+(X)|\geq2$ by Lemma \ref{lemma:plusminus}. Pick a non-trivial element $f\in\mcal A^+(X)$, and let $\alpha\neq1$ be a positive number such that $fx_1=\alpha x_1$. Then 
$$\alpha x_1\cdot c_{n-1}(X)=fx_1\cdot c_{n-1}(X)=x_1\cdot c_{n-1}(X) $$
since the Chern class $c_{n-1}(X)$ is invariant under $f$. Thus $x_1\cdot c_{n-1}(X)=0$; similarly we get $x_2\cdot c_{n-1}(X)=0$. Therefore $c_{n-1}(X)=0$ as $\{x_1,x_2\}$ is a basis of $N^1_\R(X)$.
\end{proof}

\begin{rem} 
(1) The same arguments as in Corollary \ref{cor:AutInfinite} yield 
$$ c_{i_1}(X) \cdot \ldots \cdot c_{i_r}(X) = 0$$
if $i_1 + \ldots + i_r = n-1$. \\
(2) We do not know of any example of a simply connected Calabi-Yau manifold $X$ in the strong sense (i.e.\ such that $H^q(X,\mathcal O_X) = 0$  for $1 \leq q \leq n-1$) of even dimension $n$ such that $c_{n-1}(X) = 0$. 
One might wonder whether any simply connected irreducible projective manifold $X$ of dimension $n$ with $\omega_X\simeq \mathcal O_X$ and $c_{n-1}(X) = 0$ is a hyperk\"ahler manifold.
\end{rem} 
 
In some further cases, the even dimensional case can be treated: 

\begin{thm} Let $X$ be a Calabi-Yau manifold of even dimension  $n$. If $\rho(X) = 2$ and if $c_2(X) $ can be represented by a positive closed
$(2,2)$-form, then ${\Aut}(X)$ is  finite. 
\end{thm}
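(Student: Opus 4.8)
The plan is to argue by contradiction, assuming $\Aut(X)$ is infinite, and to derive a contradiction from the \emph{pointwise} positivity of the form representing $c_2(X)$. Write $n=2m$ and assume $m\geq 2$ (for $n=2$ a positive $(2,2)$-form is merely a volume form and the statement has no content, the real case being $n\geq 4$). By Lemma \ref{lemma:plusminus} the group $\mcal A^+(X)$ is infinite. Since an element of determinant $1$ acting on the two-dimensional space $N^1(X)_\R$ and permuting the boundary rays $\ell_1,\ell_2$ of $\Effb(X)$ cannot interchange them (a swap would have negative determinant), every nontrivial $f\in\mcal A^+(X)$ fixes each ray, so there is one with $fx_1=\alpha x_1$ and $fx_2=\alpha^{-1}x_2$ for some $\alpha>0$, $\alpha\neq 1$. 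Because $c_2(X)$ is $f$-invariant and $f$ preserves top intersection numbers, for $a+b=n-2$ I get
\[ \int_X x_1^a\cdot x_2^b\cdot c_2(X)=\alpha^{a-b}\int_X x_1^a\cdot x_2^b\cdot c_2(X), \]
so this number vanishes unless $a=b=m-1$. In particular $\int_X x_1^m\cdot x_2^{m-2}\cdot c_2(X)=0$, while Lemma \ref{easylemma} gives $x_1^m\cdot x_2^m>0$ (the only surviving term in $(x_1+x_2)^n>0$). Hence the class $x_1^m\cdot x_2^{m-2}$ is nonzero, since it pairs nontrivially with $x_2^2$.

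The second step promotes this numerical vanishing to a contradiction; note that pure intersection theory will not suffice, since the relevant degree-$(n-2)$ pairing is the monomial $(st)^{m-1}$, which is genuinely invariant under the scaling $(s,t)\mapsto(\alpha s,\alpha^{-1}t)$. Let $\eta$ be a (strictly) positive closed $(2,2)$-form with $[\eta]=c_2(X)$. I would construct a nonzero positive closed $(n-2,n-2)$-current $T$ of class $x_1^m\cdot x_2^{m-2}$ as follows: choose Kähler forms $\alpha_j,\beta_j$ representing the ample classes $x_1+\epsilon_j x_2$ and $\epsilon_j x_1+x_2$ with $\epsilon_j\to 0$, and let $T$ be a weak limit of the strongly positive closed forms $\alpha_j^m\wedge\beta_j^{m-2}$. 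Their masses $\int_X\alpha_j^m\wedge\beta_j^{m-2}\wedge\omega_0^2$ converge (to $x_1^m\cdot x_2^{m-2}\cdot\omega_0^2$ for a fixed Kähler $\omega_0$), so a weak limit exists by compactness; it is positive, closed, and of class $x_1^m\cdot x_2^{m-2}$, hence $T\neq 0$. Now $\eta\wedge T$ is a positive measure whose total mass is the cohomological pairing $[\eta]\cdot[T]=\int_X x_1^m\cdot x_2^{m-2}\cdot c_2(X)=0$, so $\eta\wedge T=0$; by strict positivity of $\eta$ this forces $T=0$, contradicting $[T]\neq 0$.

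The main obstacle is precisely this analytic input, where the meaning of ``positive'' is decisive. I must check that $\eta\wedge T$ is a well-defined positive measure (pairing the strongly positive current $T$, a limit of products of Kähler forms, against $\eta$), and, crucially, that strict pointwise positivity of $\eta$ upgrades the vanishing of the measure to the vanishing of the current: at each point, an interior element $\eta_x$ of the positive cone pairs strictly positively with every nonzero strongly positive $(n-2,n-2)$-covector, so $\eta\wedge T=0$ gives $T=0$. A secondary technical point is the existence and the cohomology class of the weak limit $T$, which I obtain from the uniform mass bound. I expect no difficulty on the algebraic side: everything there reduces to the scaling identity together with Lemma \ref{easylemma}, and the argument uses neither the irrationality of the $\ell_i$ nor the vanishing $c_{n-1}(X)=0$ of Corollary \ref{cor:AutInfinite}.
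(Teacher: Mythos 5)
Your proof is correct, and its algebraic skeleton is the same as the paper's: both pick a nontrivial $f\in\mcal A^+(X)$ acting by $fx_1=\alpha x_1$, $fx_2=\alpha^{-1}x_2$ with $\alpha\neq 1$ (via Lemma \ref{lemma:plusminus}), intersect the $f$-invariant class $c_2(X)$ against a monomial in $x_1,x_2$ of complementary degree, and play the resulting scaling factor off against a positivity statement. Where you genuinely diverge is in the choice of monomial and in the justification of positivity. The paper argues with $x_1^{2k}\cdot c_2(X)^k$ when $m=2k$ is even, and with $x_1^{2s}\cdot c_2(X)^{s+1}$ when $n=4s+2$, asserting $x_1^{2k}\cdot c_2(X)^k>0$ in one line ``by our positivity assumption''; this compresses precisely the current-theoretic work you make explicit (a nonzero strongly positive closed current representing the nef power, paired against the positive form), and in addition it involves powers $\eta^k$ of the representing form, whose positivity is a delicate point since products of positive (as opposed to strongly positive) $(p,p)$-forms need not be positive. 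Your variant $x_1^m\cdot x_2^{m-2}\cdot c_2(X)$, with scaling factor $\alpha^2\neq1$, uses only a single factor of $\eta$, so the pairing $\eta\wedge T$ with the strongly positive limit current $T$ is unproblematic, and your pointwise interior argument ($\eta_x\geq\epsilon\,\omega_{0,x}^2$ uniformly, so $\eta\wedge T=0$ forces the mass $\omega_0^2\wedge T$ to vanish, hence $T=0$, contradicting $[T]=x_1^m\cdot x_2^{m-2}\neq0$ via Lemma \ref{easylemma}) correctly upgrades the cohomological vanishing; you also avoid the paper's parity case split. Your aside about $n=2$ is substantive rather than cosmetic: there the paper's odd case degenerates to $s=0$ with scaling factor $\alpha^0=1$, and indeed the statement must be read with $n\geq4$, since K3 surfaces of Picard number two can have infinite automorphism group while $c_2$ integrates to $24$ and is representable by a positive volume form.
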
 

\begin{proof} 
Arguing by contradiction, we suppose that there is an automorphism $f\in\mcal A^+(X)$ of infinite order, cf.\ Lemma \ref{lemma:plusminus}. Write $n = 2m$. Then $x_1^m \ne 0$ and $x_2^m  \ne 0$ by Lemma \ref{easylemma}. 

Suppose that $m$ is even, and write $m = 2k$. Then
$$ x_1^{2k}\cdot c_2(X)^k  > 0 $$
by our positivity assumption on $c_2(X)$. On the other hand, 
$$ x_1^{2k} \cdot c_2(X)^k = (fx_1)^{2k} \cdot c_2(X)^k = \alpha^{2k} x_1^{2k} \cdot c_2(X)^k $$
since $c_2(X)$ is invariant under $f$.  Since $\alpha \neq 1$, this is a contradiction. 

If $m $ is odd, we write $n = 4s+ 2$ and argue with $x_1^{2s} \cdot c_2(X)^{s+1}$. 
\end{proof} 

Notice that for every projective manifold $X$ of dimension $n$ with nef canonical bundle, the second Chern class $c_2(X)$ has the following positivity property (Miyaoka \cite{Miy87}):
$$ c_2(X) \cdot H_1 \ldots \cdot H_{n-2} \geq 0 $$
for all ample line bundles $H_j$.
\vskip .2cm   
Concerning bounds for $\mathcal B(X)$, we have:

\begin{pro}\label{proposition:birgroupBoundary}
Let $X$ be a Calabi-Yau manifold such that $\rho(X)=2$. Assume that $\Nef(X)\nsubseteq{\Mov}(X)$. Then $\mathcal A^+(X)=\mathcal B^+(X)$. In particular, if the dimension of $X$ is odd, then $|\mathcal B(X)|\leq2$.
\end{pro}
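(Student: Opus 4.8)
The plan is to prove the inclusion $\mathcal B^+(X) \subseteq \mathcal A^+(X)$, the reverse inclusion $\mathcal A^+(X) \subseteq \mathcal B^+(X)$ being immediate from $\Aut(X) \subseteq \Bir(X)$ together with the fact that $r$ does not change determinants. First I would convert the hypothesis into information about the boundary rays. Since every nef class is movable, $\Nef(X) \subseteq \overline{\Mov}(X)$, and for nested convex cones this yields $\Amp(X) = \inte\Nef(X) \subseteq \inte\overline{\Mov}(X) = \Mov(X)$. Therefore any class witnessing $\Nef(X) \nsubseteq \Mov(X)$ lies on $\partial\overline{\Mov}(X) = m_1 \cup m_2$ and, being nef but not ample, also on $\partial\Nef(X) = \ell_1 \cup \ell_2$; since two rays sharing a nonzero point are equal, after relabelling I may assume $\ell_1 = m_1$.

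Next I would pin down the shape of an arbitrary $g \in \mathcal B^+(X)$. As $\det g = 1$, the action of $g$ on $\{m_1, m_2\}$ cannot interchange the two rays — that would force $\det g = -1$, as in Lemma \ref{lemma:squaredMinus} — so $g$ fixes each $m_i$, with $g y_1 = \lambda y_1$ and $g y_2 = \lambda^{-1} y_2$ for some $\lambda > 0$ (the product of eigenvalues being $\det g = 1$). In particular $g$ fixes the common ray $\ell_1 = m_1$. I would then choose linear coordinates on $N^1(X)_\R$ in which $m_1$ and $m_2$ are the two axes (legitimate since $y_1, y_2$ form a basis); in these coordinates $g$ multiplies the slope of every class by $\lambda^{-2}$, and $\Amp(X)$ is precisely the set of classes whose slope lies strictly between the slope $0$ of $m_1$ and the slope of $\ell_2$.

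The key step is to exhibit an ample class that stays ample under $g$ and then invoke Kawamata's lemma. After replacing $g$ by $g^{-1}$ if necessary — harmless since $\mathcal A^+(X)$ is a group — I may assume $\lambda \geq 1$, so that $\lambda^{-2} \leq 1$ and $g$ does not increase slopes. For any ample class $H$ the slope of $g(H)$ then still lies strictly between $0$ and the slope of $\ell_2$, because only the upper endpoint can move and it moves downward; hence $g(H)$ is again ample. By the quoted lemma of Kawamata, $g$ is induced by an automorphism, i.e. $g \in \mathcal A(X)$, and $\det g = 1$ gives $g \in \mathcal A^+(X)$, completing the inclusion and the equality. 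I expect this slope step to be the one point requiring care: it is essential that $\ell_1$ is genuinely a boundary ray of $\overline{\Mov}(X)$, so that $g$ contracts the movable cone toward the shared ray $m_1 = \ell_1$ and cannot push $\ell_2$ out of $\Nef(X)$ — this is exactly where the hypothesis $\Nef(X) \nsubseteq \Mov(X)$ is used, and also why the normalisation $\lambda \geq 1$ is needed to orient the contraction correctly.

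Finally, for the last assertion I would feed the equality into the odd-dimensional theory already at hand. If $\dim X$ is odd then $\Aut(X)$ is finite by Theorem \ref{thm:Oguiso}(1), so $\mathcal A(X)$ is finite and $|\mathcal A^+(X)| = 1$ by Proposition \ref{automorphism}; the equality $\mathcal A^+(X) = \mathcal B^+(X)$ then gives $|\mathcal B^+(X)| = 1$, and Lemma \ref{lemma:plusminus} yields $|\mathcal B(X)| \leq 2$.
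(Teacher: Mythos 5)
Your proof is correct and takes essentially the same route as the paper: both reduce the hypothesis to $\ell_1=m_1$ after relabelling, note that $\det g=1$ forces any $g\in\mathcal B^+(X)$ to fix each ray $m_i$ (a swap would give determinant $-1$), exhibit an ample class whose image under $g$ is again ample, conclude $g\in\mathcal A^+(X)$ via the quoted lemma of Kawamata, and derive the odd-dimensional bound from Theorem \ref{thm:Oguiso}, Proposition \ref{automorphism} and Lemma \ref{lemma:plusminus}. The only (harmless) difference is in the middle step: where the paper argues qualitatively that $g\Nef(X)=\R_+m_1+\R_+g\ell_2$ shares the extremal ray $m_1$ with $\Nef(X)$ and hence meets its interior, you normalize $\lambda\geq1$ (replacing $g$ by $g^{-1}$ if necessary) and check by the slope computation that $g$ then maps the entire ample cone into itself.
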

\begin{proof} 
The condition $\Nef(X) \nsubseteq \Mov(X)$ implies that one of the rays $\ell_i$ is an extremal ray of $\overline{\Mov}(X)$. Hence, without loss of generality, we may assume that $m_1=\ell_1$. Let $g$ be a non-trivial element of $\mathcal B^+(X)$. Then $g\ell_1=gm_1=m_1$, and $m_1$ is an extremal ray of the cone 
$$\R_+m_1+\R_+g\ell_2=\R_+g\ell_1+\R_+g\ell_2=g\Nef(X).$$
This implies that $g\Nef(X)$ intersects the interior of $\Nef(X)$, and hence $g\in\mcal A(X)$ by Lemma 2.4. This proves the first claim.

The second claim then follows from Proposition \ref{automorphism}.
\end{proof}

\begin{thm}\label{automorphismDet1}
Let $X$ be a Calabi-Yau manifold such that $\rho(X)=2$. Then either $|\mcal A^+(X)|=1$ or $\mcal A^+(X)\simeq\Z$; and either $|\mcal B^+(X)|=1$ or $\mcal B^+(X)\simeq\Z$.
\end{thm}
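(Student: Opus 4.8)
The plan is to prove the statement for $\mcal A^+(X)$; the argument for $\mcal B^+(X)$ is verbatim the same after replacing the nef cone and its generators $\ell_i, x_i$ by the movable cone and its generators $m_i, y_i$. First I would record that every $g\in\mcal A^+(X)$ fixes \emph{each} of the two rays $\ell_1,\ell_2$ separately. Indeed, $g$ preserves $\Nef(X)$ and hence permutes $\{\ell_1,\ell_2\}$; if $g$ swapped them we would have $gx_1=\alpha x_2$ and $gx_2=\beta x_1$ with $\alpha,\beta>0$, and the matrix of $g$ in the basis $\{x_1,x_2\}$ would have determinant $-\alpha\beta<0$, contradicting $g\in\mcal A^+(X)$. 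Thus $gx_1=\alpha_g x_1$ and $gx_2=\alpha_g^{-1}x_2$ for some $\alpha_g>0$, the exponent $-1$ being forced by $\det g=1$.

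Next I would define the assignment $g\mapsto\alpha_g$ and check it is a homomorphism $\phi\colon\mcal A^+(X)\to(\R_{>0},\cdot)$: since all elements are simultaneously diagonal in $\{x_1,x_2\}$, one has $\alpha_{gh}=\alpha_g\alpha_h$. This homomorphism is injective, because $\alpha_g=1$ gives $gx_1=x_1$ and $gx_2=x_2$, hence $g=\id$ as $\{x_1,x_2\}$ is a basis of $N^1(X)_\R$; the same computation shows $\mcal A^+(X)$ is torsion-free. Composing with $\log$ realizes $\mcal A^+(X)$ as a subgroup of $(\R,+)$. Since every subgroup of $(\R,+)$ is either dense or infinite cyclic (or trivial), the theorem reduces to showing that this subgroup is \emph{discrete}, and this discreteness is the one real point of the proof.

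For discreteness I would exploit that $g$ acts on the integral lattice $N^1(X)$, so in an integral basis $g$ is an integer matrix with $\det g=1$ and eigenvalues $\alpha_g,\alpha_g^{-1}$. Consequently its trace is an integer,
$$\operatorname{tr} g=\alpha_g+\alpha_g^{-1}\in\Z.$$
For $g\neq\id$ we have $\alpha_g\neq1$, so $\alpha_g+\alpha_g^{-1}>2$; being an integer it is therefore $\geq 3$, which forces $\alpha_g\notin\big(\tfrac{3-\sqrt5}{2},\tfrac{3+\sqrt5}{2}\big)$ and in particular bounds $|\log\alpha_g|$ away from $0$ by the fixed constant $\log\frac{3+\sqrt5}{2}>0$. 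Hence the subgroup $\log\phi\big(\mcal A^+(X)\big)\subseteq\R$ meets some interval $(-\delta,\delta)$ only in $0$, so it is discrete, therefore cyclic. Thus $\mcal A^+(X)$ is trivial or isomorphic to $\Z$, and the identical reasoning with $m_i,y_i$ and $\overline{\Mov}(X)$ settles $\mcal B^+(X)$.

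I expect the main obstacle to be packaging the discreteness cleanly: the naive approach only embeds $\mcal A^+(X)$ into $(\R,+)$, where a dense image is a priori possible, and one genuinely needs the integrality of the action to produce the spectral gap. The trace argument above is what rules out the dense case; an alternative would be to note that the integer matrices form a discrete subset of $M_2(\R)$ and that $g=P\,\mathrm{diag}(\alpha_g,\alpha_g^{-1})\,P^{-1}\to\id$ as $\alpha_g\to1$, but the trace computation is more elementary and avoids any choice of the change-of-basis matrix $P$.
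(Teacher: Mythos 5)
Your proposal is correct, and its overall skeleton coincides with the paper's proof of Theorem \ref{automorphismDet1}: embed $\mcal A^+(X)$ into $(\R_{>0},\cdot)$ via $g\mapsto\alpha_g$, reduce to showing the image is discrete (a discrete subgroup of $(\R,+)$ being trivial or infinite cyclic -- the paper cites \cite[21.1]{Fo81} for this), and use the integrality of the action on $N^1(X)$ to rule out density. The genuine difference is at the discreteness step. The paper argues by contradiction with a limiting process: if $\alpha_{g_i}\to1$, then $g_ih_j\to h_j$ for two fixed linearly independent integral classes $h_1,h_2$, and since $N^1(X)$ is a lattice this forces $g_ih_j=h_j$, hence $g_i=\id$, for $i\gg0$ -- this is exactly the ``alternative'' you sketch in your closing paragraph, except that working with two integral classes lets the paper avoid your change-of-basis matrix $P$ entirely. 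Your trace argument is more elementary and more quantitative: from $\operatorname{tr}(g)=\alpha_g+\alpha_g^{-1}\in\Z$ and $\alpha_g\neq1$ you get $\operatorname{tr}(g)\geq3$, hence the explicit uniform spectral gap $|\log\alpha_g|\geq\log\frac{3+\sqrt{5}}{2}$, with no sequence needed. As a pleasant by-product, the relation $\alpha+\alpha^{-1}\in\Z$ is precisely what the paper re-derives later, in the proof of Theorem \ref{unique}(1), by a separate computation with the primitive integral classes $h$, $fh$, $f^2h$; your argument gets it for free. You also make explicit a point the paper uses silently (compare the opening line of the proof of Lemma \ref{easylemma}): a determinant-one element must fix each boundary ray rather than swap them, since swapping forces determinant $-\alpha\beta<0$. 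The only trade-off is that the trace identity is specific to Picard number two, whereas the paper's mechanism -- integral matrices sufficiently close to the identity equal the identity -- is the formulation that would survive in higher rank.
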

\begin{proof}
Assume that $|\mcal A^+(X)|\geq2$. For every $g\in\mcal A^+(X)$, let $\alpha_g$ be the positive number such that $gy_1=\alpha_g y_1$, and set 
$$\mcal S=\{\alpha_g\mid g\in\mcal A^+(X)\}.$$
Note that $\mcal S$ is a multiplicative subgroup of $\R^*$ and that the map 
$$ \mcal A^+(X) \to \mcal S, \quad g \mapsto \alpha_g$$
is an isomorphism of groups. We need to show that $\mcal S$ is an infinite cyclic group. 

We first show that $\mcal S$ is, as a set, bounded away from $1$. Otherwise, we can pick a sequence $(g_i)$ in $\mcal A^+(X)$ such that $\alpha_{g_i}$ converges to $1$. Fix two integral linearly independent classes $h_1$ and $h_2$ in $N^1(X)_\R$. Then $g_ih_1$ converge to $h_1$ and $g_ih_2$ converge to $h_2$. Since $g_ih_1$ and $g_ih_2$ are also integral classes and $N^1(X)$ is a lattice in $N^1(X)_\R$, this implies that $g_ih_1=h_1$ and $g_ih_2=h_2$ for $i\gg0$, and hence $g_i=\id$ for $i\gg0$. 

Hence, the set $\mcal S'=\{\ln\alpha\mid\alpha\in\mcal S\}$ is an additive subgroup of $\R$ which is discrete as a set. Then it is a standard fact that $\mcal S'$, and hence $\mcal S$, is isomorphic to $\Z$, cf.\ \cite[21.1]{Fo81}.

The proof for the birational automorphism group is the same.
\end{proof}

\section{Structures of $\Nef(X)$ and $\overline{\Mov}(X)$}

\begin{pro}\label{proposition:weakKMconjecture}
Let $X$ be a Calabi-Yau manifold such that $\rho(X)=2$. If $\mcal A(X)$ is finite, then the weak Cone conjecture holds for $\Nef(X)$. If $\mcal B(X)$ is finite, then the weak Cone conjecture holds for $\overline{\Mov}(X)$.
\end{pro}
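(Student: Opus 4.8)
The plan is to prove the statement for $\mathcal A(X)$ and $\Nef(X)$; the case of $\mathcal B(X)$ and $\overline{\Mov}(X)$ is entirely parallel, replacing $\ell_i$ by $m_i$ and $x_i$ by $y_i$ throughout. First I would invoke Proposition \ref{automorphism}: since $\mcal A(X)$ is finite, we have $|\mcal A(X)|\leq 2$, and in fact $\mcal A^+(X)=\{\id\}$. The crucial structural consequence is that the action of $\mcal A(X)$ on the nef cone is \emph{trivial} when $|\mcal A(X)|=1$, and is generated by a single involution $g\in\mcal A^-(X)$ when $|\mcal A(X)|=2$. In the latter case $g$ swaps the two boundary rays, i.e.\ $gx_1=\alpha x_2$ and $gx_2=\beta x_1$ for positive $\alpha,\beta$.

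In the trivial case $|\mcal A(X)|=1$, I would simply take $\Pi=\Nef(X)$ itself as the weak fundamental domain; the union condition is immediate, the interior-disjointness condition is vacuous, and the boundary-in-a-rational-hyperplane condition holds trivially since there is only the identity. The content of the proposition therefore lies in the case $|\mcal A(X)|=2$. Here I would construct $\Pi$ by cutting $\Nef(X)\cap\Eff(X)$ in half along the \emph{fixed ray} of the involution $g$. Concretely, $g$ restricted to $N^1(X)_\R$ is an involution with $\det=-1$, so it has a one-dimensional $(+1)$-eigenspace spanned by some class $w=x_1+\tfrac{\alpha}{\beta^{1/2}}\cdots$ (the exact normalization is a routine computation: $w$ is fixed precisely when $g$ interchanges the rays, and since $g^2=\id$ by Lemma \ref{lemma:squaredMinus} one gets $\alpha\beta=1$, so the fixed direction is $w=\sqrt{\beta}\,x_1+\sqrt{\alpha}\,x_2$ up to scale). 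I would then let $\Pi$ be the subcone of $\Nef(X)\cap\Eff(X)$ lying on one side of the ray $\R_+w$, say the cone spanned by $\ell_1$ and the fixed ray (intersected with $\Eff(X)$). Then $\Pi\cup g^*\Pi=\Nef(X)\cap\Eff(X)$ because $g$ reflects one half onto the other, the interiors are disjoint because they lie strictly on opposite sides of $\R_+w$, and $\Pi\cap g^*\Pi=\R_+w$.

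The step I expect to be the main obstacle is verifying that the separating ray $\R_+w$ lies in a \emph{rational} hyperplane, as the weak conjecture demands. This is delicate precisely because, by Corollary \ref{cor:AutInfinite}, one cannot in general expect the boundary rays $\ell_i$ to be rational—indeed irrationality of the $\ell_i$ is exactly the obstruction to finiteness failing. However, in the present situation $\mcal A(X)$ is \emph{finite}, and the fixed class $w$ is fixed by an element $g$ of the finite group $\mcal A(X)\subseteq\GL(N^1(X))$ acting on the integral lattice; the $(+1)$-eigenspace of an integral involution is a rational subspace, since it is the kernel of the integral matrix $g-\id$. Hence $\R_+w$ spans a rational line, and the hyperplane containing $\Pi\cap g^*\Pi=\R_+w$ is rational. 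This is the key point that makes the construction work and distinguishes the finite case from the infinite case treated separately. Once rationality of $w$ is secured, all three defining conditions of the weak fundamental domain are verified, completing the proof; the argument for $\overline{\Mov}(X)$ and $\mcal B(X)$ is identical, using that an element of $\mcal B^-(X)$ squares to the identity by Lemma \ref{lemma:squaredMinus}.
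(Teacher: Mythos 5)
Your proposal is correct and follows essentially the same approach as the paper: reduce via Proposition \ref{automorphism} to the case $|\mcal A(X)|=2$ and split $\Nef(X)$ along the fixed ray of the unique involution $g\in\mcal A^-(X)$, whose rationality is the key point. The only (cosmetic) difference is that the paper produces the rational fixed ray by averaging an integral class, setting $y=x+gx$, which is automatically integral and $g$-invariant, whereas you diagonalize $g$ explicitly and then appeal to the rationality of $\ker(g-\id)$ --- both arguments are valid and yield the same ray.
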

\begin{proof}
We only prove the statement about the nef cone, since the other statement is analogous. By Proposition \ref{automorphism}, we have $\vert \mcal A(X) \vert \leq  2$, hence we may assume that $|\mathcal A(X)|=2$. Fix an integral class $x\in\Nef(X)$, let $g\in\mathcal A^-(X)$, and consider the class $y=x+gx\in\Nef(X)$. Then $y$ is fixed under the action of $\mathcal A(X)$. Since $g$ acts on $N^1(X)$, both $gx$ and $y$ must be integral. It is then obvious that $\Pi=\ell_1+\R_+y$ is a fundamental domain for the action of $\mathcal A(X)$ on $\Nef(X)$.
\end{proof}

\begin{rem}
If $X$ is a Calabi-Yau manifold of odd dimension such that $\rho(X)=2$ and $\Nef(X)\nsubseteq{\Mov}(X)$, then the weak Cone conjecture holds for $\overline{\Mov}(X)$. The proof is analogous to that of Proposition \ref{proposition:weakKMconjecture}, using Proposition \ref{proposition:birgroupBoundary}.
\end{rem}

\begin{pro}\label{proposition:NefInMov}
Let $X$ be a Calabi-Yau manifold such that $\rho(X)=2$. Assume that $\Nef(X)\subseteq{\Mov}(X)$. Then the Cone conjecture holds for $\Nef(X)$.
\end{pro}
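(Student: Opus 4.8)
The plan is to first use the hypothesis $\Nef(X)\subseteq\Mov(X)$ to force the boundary rays $\ell_1,\ell_2$ of $\Nef(X)$ to be rational, which immediately puts us in the finite case, and then to write down an explicit rational polyhedral fundamental domain by hand. I would begin with the observation that, since $\rho(X)=2$, the cones $\overline{\Mov}(X)$ and $\Effb(X)$ are full-dimensional, so $\Mov(X)=\inte\overline{\Mov}(X)\subseteq\inte\Effb(X)=\B(X)$ because $\overline{\Mov}(X)\subseteq\Effb(X)$ and the interior of a subcone sits inside the interior of the ambient cone. Combined with the hypothesis this gives $\Nef(X)\setminus\{0\}\subseteq\Mov(X)\subseteq\B(X)$. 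Now Theorem \ref{thm:NefInsideBig} says that $\Nef(X)$ is locally rational polyhedral at every point of $\B(X)$, hence at every nonzero point of $\ell_1$ and $\ell_2$; in a two-dimensional cone this forces both boundary rays to be rational. By the contrapositive of Corollary \ref{cor:AutInfinite}(1), $\Aut(X)$ is then finite, so by Proposition \ref{automorphism} we have $\mcal A^+(X)=\{\id\}$ and $|\mcal A(X)|\leq 2$.

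Next I would record that $\Nef(X)\cap\Eff(X)=\Nef(X)$: indeed $\Nef(X)\setminus\{0\}\subseteq\B(X)\subseteq\Eff(X)$, since every big class is effective. It then remains only to exhibit $\Pi$, and I would split into the two possible sizes of $\mcal A(X)$. If $|\mcal A(X)|=1$, the action is trivial and I would take $\Pi=\Nef(X)$, which is rational polyhedral by the first step and is tautologically a fundamental domain. If $|\mcal A(X)|=2$, write $\mcal A(X)=\{\id,g\}$ with $g\in\mcal A^-(X)$; then $g$ swaps $\ell_1$ and $\ell_2$, and $g^2\in\mcal A^+(X)=\{\id\}$, so $g$ is an involution with eigenvalues $+1$ and $-1$. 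The line $\ker(g-\id)$ is rational and one-dimensional, hence contains a nonzero rational class $y$, which I would normalize to lie in $\inte\Nef(X)$ (for any interior $z$ the class $z+gz$ is fixed and interior, and the fixed line is unique). Setting $\Pi=\ell_1+\R_+y$, both generators are rational, so $\Pi$ is rational polyhedral; since $g$ fixes $y$ and carries $\ell_1$ to $\ell_2$, one gets $g\Pi=\R_+y+\ell_2$, whence $\Pi\cup g\Pi=\Nef(X)$ and $\inte\Pi\cap\inte g\Pi=\emptyset$, so $\Pi$ is the desired fundamental domain.

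The step I expect to be the crux is the rationality of the rays $\ell_i$, since everything afterward — finiteness of $\Aut(X)$, and rational polyhedrality of $\Pi$ — depends on it. The delicate point there is to verify that the assumption $\Nef(X)\subseteq\Mov(X)$ really places the boundary of $\Nef(X)$ inside the \emph{open} big cone $\B(X)$, and not merely in $\Effb(X)$, so that Theorem \ref{thm:NefInsideBig} applies on those rays; this is where the full-dimensionality coming from $\rho(X)=2$ does the essential work. The remaining ingredients — that big classes are effective, and that an integral involution of $N^1(X)$ has a rational fixed line meeting the interior of the nef cone — are routine once this rationality is established.
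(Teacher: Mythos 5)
Your proof is correct and takes essentially the same route as the paper: the paper's two-line argument likewise deduces $\Nef(X)\subseteq\B(X)$ (away from the origin) from the hypothesis, invokes Theorem \ref{thm:NefInsideBig} to conclude that the nef cone is rational polyhedral, and then repeats the construction of Proposition \ref{proposition:weakKMconjecture}, whose class $y=x+gx$ is exactly your rational fixed interior class for the involution $g\in\mcal A^-(X)$. You merely spell out the steps the paper leaves implicit (interiors of nested full-dimensional cones, finiteness of $\Aut(X)$ via rationality of the $\ell_i$, and $\Nef(X)\cap\Eff(X)=\Nef(X)$), all correctly.
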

\begin{proof}
By assumption, we have $\Nef(X)\subseteq\B(X)$, and hence, the nef cone is rational polyhedral by Theorem \ref{thm:NefInsideBig}. Then argue as in the proof of Proposition \ref{proposition:weakKMconjecture}.
\end{proof}

\begin{lem}\label{lem:BirInfiniteMov}
Let $X$ be a Calabi-Yau manifold with $\rho(X)=2$. Assume that $\Bir(X)$ is infinite. Then $\overline{\Mov}(X)\cap\Eff(X)=\Mov(X)$.
\end{lem}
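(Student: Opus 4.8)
The plan is to reduce the statement to two facts: that $\overline{\Mov}(X)=\Effb(X)$, and that no nonzero class on the boundary of $\overline{\Mov}(X)$ is effective. Throughout I use Notation \ref{notation}, so $m_1,m_2$ are the boundary rays of $\overline{\Mov}(X)$. First I would record that both $m_i$ are irrational: if one of them were rational, then $\Bir(X)$ would be finite by Theorem \ref{thm:Oguiso}(3), contrary to our hypothesis.

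The central step is to prove $\overline{\Mov}(X)=\Effb(X)$, which is the movable analogue of Corollary \ref{cor:AutInfinite}(2). Since every movable divisor is effective, we always have $\overline{\Mov}(X)\subseteq\Effb(X)$. If this inclusion were strict, then, working in the two-dimensional space $N^1(X)_\R$, at least one boundary ray $m_i$ of $\overline{\Mov}(X)$ would lie in the interior $\B(X)$ of $\Effb(X)$. But $\overline{\Mov}(X)$ is locally rational polyhedral in $\B(X)$ by Theorem \ref{thm:NefInsideBig}, so such an $m_i$ would be a rational ray, contradicting the irrationality established above. Hence $\overline{\Mov}(X)=\Effb(X)$, and consequently $\Mov(X)=\inte\Effb(X)=\B(X)$, while $m_1$ and $m_2$ are now extremal rays of $\Effb(X)$.

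It then remains to check that no nonzero class on $m_1$ or $m_2$ is effective. Granting this, any class in $\overline{\Mov}(X)\cap\Eff(X)$ avoids the boundary and so lies in $\Mov(X)$; the reverse inclusion $\Mov(X)=\B(X)\subseteq\Eff(X)$ is immediate since big classes are effective. To prove the claim I would argue exactly as in the second part of the proof of Corollary \ref{cor:AutInfinite}(2): if some nonzero $y\in m_1$ were effective, write $y=\sum_j\delta_j D_j$ with $\delta_j>0$ and $D_j$ prime divisors. Each $[D_j]$ lies in $\Effb(X)$, and since $y$ spans an extremal ray of $\Effb(X)$, every $[D_j]$ must lie on $m_1$; but each $[D_j]$ is a rational class, forcing $m_1$ to be rational, a contradiction. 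The same reasoning applies to $m_2$.

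I expect the main obstacle to be the equality $\overline{\Mov}(X)=\Effb(X)$, as this is the step that genuinely combines the infinitude of $\Bir(X)$ (through the irrationality of the $m_i$) with the local polyhedrality of $\overline{\Mov}(X)$ inside the big cone. Once it is in place, the remaining arguments are the familiar extremality and lattice considerations already used for the nef cone, and the conclusion $\overline{\Mov}(X)\cap\Eff(X)=\Mov(X)$ follows.
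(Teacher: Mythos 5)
Your proof is correct and follows essentially the same route as the paper's: irrationality of the rays $m_i$ plus Theorem \ref{thm:NefInsideBig} yields $\overline{\Mov}(X)=\Effb(X)$, and then decomposing a putative effective class on a boundary ray into prime divisors produces a contradiction with irrationality. The only differences are cosmetic: the paper cites Proposition \ref{proposition:Oguiso} for the irrationality and phrases the final contradiction as $m_1$ failing to be extremal, whereas you run the equivalent contrapositive (extremality forces each $[D_j]$ onto $m_1$, contradicting its irrationality) and additionally spell out the easy reverse inclusion $\Mov(X)=\B(X)\subseteq\Eff(X)$ that the paper leaves implicit.
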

\begin{proof}
The rays of $\overline{\Mov}(X)$ are irrational by Proposition \ref{proposition:Oguiso}, and therefore $\overline{\Mov}(X)=\Effb(X)$ by Theorem \ref{thm:NefInsideBig}. 
We cannot have $y_1\in\Eff(X)$: otherwise, we can write $y_1=\sum\delta_i D_i\geq0$ as a sum of at least two different prime divisors, since $m_1$ is irrational. But then $m_1$ is not an extremal ray of the cone $\overline{\Mov}(X)=\Effb(X)$, a contradiction. This concludes the proof.
\end{proof}

\begin{thm}\label{thm:BirInfiniteKM}
Let $X$ be a Calabi-Yau manifold with $\rho(X)=2$. If the group $\Bir(X)$ is infinite, then the Cone conjecture holds on $X$.
\end{thm}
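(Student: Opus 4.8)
The plan is to treat the two parts of the Cone conjecture in parallel, reducing each to a single geometric construction: a rational polyhedral fundamental domain for an infinite group ($\Z$ or infinite dihedral) acting on an \emph{open} two-dimensional cone with irrational boundary rays.

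I would begin with part (2), the movable cone. Since $\Bir(X)$ is infinite, Lemma \ref{lem:BirInfiniteMov} gives $\overline{\Mov}(X)\cap\Eff(X)=\Mov(X)$, so it suffices to tile the open cone $\Mov(X)$ under the $\mcal B(X)$-action. By Theorem \ref{automorphismDet1}, $\mcal B^+(X)\cong\Z$; let $g_0$ be a generator, normalized by $g_0 y_1=\alpha y_1$ and $g_0 y_2=\alpha^{-1}y_2$ with $\alpha>1$. Since $\mcal B^-(X)=\mcal B^+(X)h$ for any $h\in\mcal B^-(X)$ (Lemma \ref{lemma:plusminus}) and such $h$ satisfy $h^2=\id$ (Lemma \ref{lemma:squaredMinus}) with $hg_0h=g_0^{-1}$, the group $\mcal B(X)$ is either infinite cyclic or infinite dihedral.

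The fundamental domain is then built as follows, and the same recipe serves both parts. If $\mcal B(X)=\mcal B^+(X)\cong\Z$, I would pick any rational class $w$ in the interior $\Mov(X)$ (rational points being dense) and set $\Pi'=\R_+ w+\R_+ g_0 w$; this is rational polyhedral because $g_0$ preserves the lattice $N^1(X)$, and writing $w=ay_1+by_2$ one finds $g_0^k w\to m_1$ as $k\to+\infty$ and $g_0^k w\to m_2$ as $k\to-\infty$, so the translates $g_0^k\Pi'$ cover $\Mov(X)$ with pairwise disjoint interiors. If $\mcal B(X)$ is infinite dihedral, I would instead use the two involutions $h$ and $g_0h$ in $\mcal B^-(X)$: each is an integral matrix squaring to $\id$ with determinant $-1$, hence has a rational $(+1)$-eigenray, which lies in the interior of the cone because the involution swaps $m_1$ and $m_2$. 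Calling these rational rays $\R_+w$ and $\R_+w'$, the cone $\Pi'=\R_+ w+\R_+w'$ is the fundamental chamber for the Coxeter action of $\langle h,g_0h\rangle$ and is again rational polyhedral. This settles part (2).

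For part (1), the nef cone, I would split according to whether $\Nef(X)\subseteq\Mov(X)$. If it is, then $\Nef(X)\subseteq\B(X)$ is rational polyhedral by Theorem \ref{thm:NefInsideBig}, and Proposition \ref{proposition:NefInMov} gives the conclusion directly. If $\Nef(X)\nsubseteq\Mov(X)$, then Proposition \ref{proposition:birgroupBoundary} shows $\mcal A^+(X)=\mcal B^+(X)$, which is infinite; hence $\Aut(X)$ is infinite and Corollary \ref{cor:AutInfinite} yields $\Nef(X)=\overline{\Mov}(X)=\Effb(X)$ with irrational rays and $\Nef(X)\cap\Eff(X)=\Amp(X)$. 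The target is again an open two-dimensional cone with irrational rays, acted on by $\mcal A(X)$ with $\mcal A^+(X)\cong\Z$ and $\mcal A^-(X)\subseteq\mcal B^-(X)$ (so its elements are involutions). I would run the identical construction with $\Amp(X)$ and $\mcal A(X)$ in place of $\Mov(X)$ and $\mcal B(X)$ to obtain the rational polyhedral fundamental domain $\Pi$, completing the proof. The covering and disjointness claims are routine once the cone is projectivized to an interval, where $\Z$ and the infinite dihedral group act in the standard way; the step requiring the most care is the dihedral case, where one must check that the fixed rays of the two reflections are genuinely rational and interior and bound a single chamber rather than a doubled one, which is exactly where $h^2=\id$ and $hg_0h=g_0^{-1}$ enter.
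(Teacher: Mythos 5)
Your proposal is correct and takes essentially the same route as the paper's proof: reduce to tiling the open cones $\Mov(X)$ and $\Amp(X)$ via Lemma \ref{lem:BirInfiniteMov}, Corollary \ref{cor:AutInfinite} and Propositions \ref{proposition:birgroupBoundary} and \ref{proposition:NefInMov}, then exhibit a rational polyhedral chamber for the $\Z$- or infinite-dihedral action of $\mcal B(X)$ (resp.\ $\mcal A(X)$). The only cosmetic difference is in the dihedral case, where you obtain the rational $(+1)$-eigenrays of the integral involutions $h$ and $g_0h$ abstractly, while the paper constructs integral points on these very rays by averaging, namely $z_1=x+\tau x$ and $z_2=z_1+fz_1$, fixed by $\tau$ and $\theta=f\tau$ respectively.
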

\begin{proof} 
(i) First we show that the Cone conjecture holds for $\Nef(X)$ in case $\Aut(X)$ is infinite. 

Note that $\Nef(X)=\Effb(X)$ and $\Nef(X)\cap\Eff(X)=\Amp(X)$ by Corollary \ref{cor:AutInfinite}(2), and in particular we have $\mcal A(X)=\mcal B(X)$. By Lemma \ref{lemma:plusminus} and Theorem \ref{automorphismDet1}, we know that $\mcal A(X)=\mcal A^+(X)\cup\mcal A^-(X)$, where $\mcal A^+(X)\simeq\Z$ and $\mcal A^-(X)=\mcal A^+(X)g$ for any $g\in\mcal A^-(X)$. 
 
Assume first that $\mcal A(X)=\mcal A^+(X)\simeq\Z$. Let $h$ be a generator of $\mcal A(X)$, let $x$ be any point in $\Amp(X)$, and denote  
$$\Pi=\R_+x+\R_+hx.$$ 
It is then straightforward to check that $\Pi$ is a fundamental domain for the action of $\mcal A(X)$ on $\Amp(X)$. Indeed, it is clear that the cones $h^k\Pi$ have disjoint interiors, and to see that they cover $\Amp(X)$, it suffices to notice that the rays $\R_+h^kx$ converge to $\ell_1$, respectively $\ell_2$, when $k\to\pm\infty$. 

Now assume that $\mcal A^-(X)\neq\emptyset$. Let $f$ be a generator of $\mcal A^+(X)$, let $\tau$ be an element of $\mcal A^-(X)$, and let $x$ be an integral class in $\Amp(X)$. Set 
$$z_1=x+\tau x\quad\text{and}\quad z_2=z_1+fz_1,$$
and note that $z_1$ and $z_2$ are integral classes since $\tau$ and $f$ act on $N^1(X)$. Denote $\theta=f\tau\in\mcal A^-(X)$. Then $\tau^2=\theta^2=\id$ by Lemma \ref{lemma:squaredMinus}, and hence
$$\theta\tau=(f\tau)\tau=f\qquad\text{and}\qquad\theta f=\theta(\theta\tau)=\tau.$$
This implies
\begin{equation}\label{eq:rays}
\tau z_1=z_1,\qquad \theta z_1=fz_1,\qquad \theta z_2=z_2. 
\end{equation} 

Now, let 
$$\Pi=\R_+z_1+\R_+z_2.$$  
Then $\Pi$ is a rational polyhedral cone, and we claim that $\Pi$ is a fundamental domain for the action of $\mcal A(X)$ on $\Amp(X)$. 
\begin{figure}[htb]
\begin{center}
\includegraphics[width=0.41\textwidth]{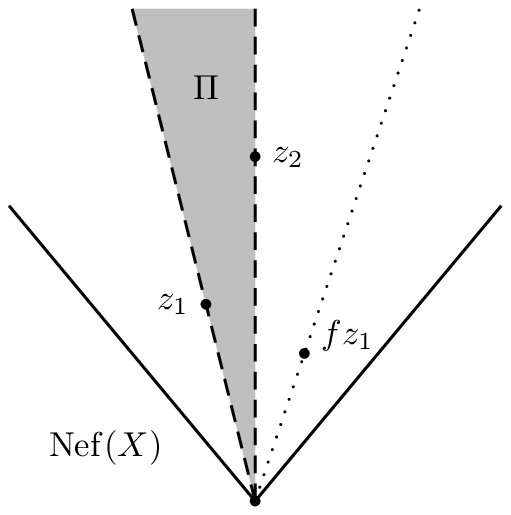}
\end{center}
\end{figure}

First, by \eqref{eq:rays} we have 
$$\theta\Pi=\R_+\theta z_1+\R_+\theta z_2=\R_+fz_1+\R_+z_2,$$ 
and thus
$$\Pi\cup\theta\Pi=\R_+z_1+\R_+fz_1.$$
This implies 
$$\bigcup_{k\in\Z}f^k(\Pi\cup\theta\Pi)=\Amp(X)$$ 
as in the first part of the proof, and therefore,  
$$\bigcup_{g\in\mcal A(X)}g\Pi=\Amp(X).$$ 

Second, assume that there exists $\lambda\in\mcal A(X)$ such that $\inte\Pi\cap\inte\lambda\Pi\neq\emptyset$. Then, possibly after replacing $\lambda$ by $\lambda^{-1}$, this implies that $\lambda z_1\subseteq\inte\Pi$ or $\lambda z_2\subseteq\inte\Pi$. If $\lambda z_1\subseteq\inte\Pi$, then by Lemma \ref{lemma:plusminus} there exists $k\in\Z$ such that $\lambda=f^k\tau$, hence $\lambda z_1=f^k z_1\in\inte\Pi$ by \eqref{eq:rays}, which is clearly impossible. Similarly, if $\lambda z_2\subseteq\inte\Pi$, again by Lemma \ref{lemma:plusminus} there exists $\ell\in\Z$ such that $\lambda=f^\ell\theta$, hence $\lambda z_2=f^\ell z_2\in\inte\Pi$ by \eqref{eq:rays}, a contradiction. This finishes the proof of (i).
\vskip .2cm 

(ii) Next we show that the Cone conjecture holds for $\Nef(X)$ if $\Aut(X)$ is finite but $\Bir(X)$ is infinite. Here $\Nef(X)\subseteq{\Mov}(X)$ by Lemma \ref{lemma:plusminus} and Proposition \ref{proposition:birgroupBoundary}. 
Then the Cone conjecture for $\Nef(X)$ holds by Proposition \ref{proposition:NefInMov}. 
\vskip .2cm 

(iii) Finally,
note that $\overline{\Mov}(X)\cap\Eff(X)=\Mov(X)$ by Lemma \ref{lem:BirInfiniteMov}, hence the proof of the Cone conjecture for $\Mov(X)$ is the same as that of (i) by a simple adaption. 
\end{proof}

\begin{exa}\label{exampleOguiso}
We recall \cite[Proposition 6.1]{Og12}. Oguiso constructs a Calabi-Yau $3$-fold $X$ with Picard number $2$, obtained as the intersection of general hypersurfaces in $\PS^3\times\PS^3$ of bi-degrees $(1,1)$, $(1,1)$, and $(2,2)$, which has the following properties: $x_1$ and $x_2$ are rational, $y_1=(3+2\sqrt2)x_2-x_1$, $y_2=(3+2\sqrt2)x_1-x_2$, there are two birational involutions $\tau_1$ and $\tau_2$ such that $\tau_1\tau_2$ is of infinite order, and the group $\Bir(X)$ is generated by $\Aut(X)$ and by $\tau_1$ and $\tau_2$.
\end{exa}

We now show that Example \ref{exampleOguiso} is a typical example of a Calabi-Yau manifold with Picard number $2$ and with infinite group of birational automorphisms.

\begin{thm} \label{unique}
Let $X$ be a Calabi-Yau manifold of dimension $n$ and with $\rho(X)=2$. Assume that $\Bir(X)$ is infinite.
\begin{enumerate}
\item Let $f$ be a generator of $\mcal B^+(X)$, and let $\alpha>0$ be the real number such that $fy_1=\alpha y_1$. Then $[\Q(\alpha):\Q]=2$.
\item Let $\{v,w\}$ be any integral basis of $N^1(X)_\R$. Then $m_1=\R_+(av+bw)$ and $m_2=\R_+(cv+dw)$, where $a,b,c,d\in\Q(\alpha)$.
\item There exist a birational automorphism $\tau$ (possibly the identity) such that $\tau^2\in\Aut(X)$, and a birational automorphism of infinite order $\sigma$ such that the group $\Bir(X)$ is generated by $\Aut(X)$ and by $\tau$ and $\sigma$.
\end{enumerate}
\end{thm}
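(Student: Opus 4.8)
The plan is to handle the three parts in turn, using throughout that $\mcal B^+(X)\simeq\Z$ by Theorem \ref{automorphismDet1} (indeed $\Bir(X)$ infinite forces $\mcal B(X)$, and hence $\mcal B^+(X)$, infinite by Lemma \ref{lemma:plusminus}), and that $r$ is by definition surjective onto $\mcal B(X)$. For (1), I would fix an integral basis of $N^1(X)$ and represent $f$ by a matrix $M\in\GL_2(\Z)$. Since $f\in\mcal B^+(X)$ is orientation-preserving it fixes each boundary ray $m_i$ of $\overline{\Mov}(X)$, so the hypothesis $fy_1=\alpha y_1$ together with $\det f=1$ gives $fy_2=\alpha^{-1}y_2$; thus $\alpha$ and $\alpha^{-1}$ are the two eigenvalues of $M$, and $\alpha$ is a root of the characteristic polynomial $t^2-ct+1$, where $c\in\Z$ is the trace of $M$. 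The key point is that $\alpha$ must be irrational: being a root of a monic integer polynomial it is an algebraic integer, so if it were rational it would be an integer, and then $\alpha^{-1}$ would also be a positive integer, forcing $\alpha=1$ and contradicting $f\ne\id$. Hence $t^2-ct+1$ is the minimal polynomial of $\alpha$ and $[\Q(\alpha):\Q]=2$.

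For (2), I would keep the same $M$, now written in the given integral basis $\{v,w\}$, and view it as a matrix over the field $\Q(\alpha)$. Since $\alpha\neq\alpha^{-1}$, the eigenvalue $\alpha\in\Q(\alpha)$ is simple, so $\ker(M-\alpha I)$ is one-dimensional over $\Q(\alpha)$ and is spanned by a vector with coordinates $a,b\in\Q(\alpha)$. Regarded in $N^1(X)_\R$, the class $av+bw$ is then a real $\alpha$-eigenvector of $f$, hence spans the $\alpha$-eigenline $m_1$; after adjusting the sign I obtain $m_1=\R_+(av+bw)$, and the identical argument with the eigenvalue $\alpha^{-1}$ produces $m_2=\R_+(cv+dw)$ with $c,d\in\Q(\alpha)$.

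For (3), the plan is to lift the group structure of $\mcal B(X)$ to $\Bir(X)$. First I would record that $\ker r\subseteq\Aut(X)$: if $h^*=\id$ on $N^1(X)$ then $h^*H=H$ is ample for any ample $H$, so $h\in\Aut(X)$ by Lemma 2.4. Next, choose $\sigma\in\Bir(X)$ with $r(\sigma)=f$; it has infinite order because $f$ does. If $\mcal B^-(X)=\emptyset$ I set $\tau=\id$; otherwise I pick $g\in\mcal B^-(X)$ and a lift $\tau\in\Bir(X)$ with $r(\tau)=g$, so that $r(\tau^2)=g^2=\id$ by Lemma \ref{lemma:squaredMinus}, whence $\tau^2\in\ker r\subseteq\Aut(X)$. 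For the generation statement, given any $h\in\Bir(X)$, Lemma \ref{lemma:plusminus} lets me write $r(h)=f^k g^{\varepsilon}$ with $k\in\Z$ and $\varepsilon\in\{0,1\}$; then $r(h)=r(\sigma^k\tau^{\varepsilon})$, so $h(\sigma^k\tau^{\varepsilon})^{-1}\in\ker r\subseteq\Aut(X)$, giving $h\in\langle\Aut(X),\tau,\sigma\rangle$.

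The manipulations in (1) and (2) are routine linear algebra over $\Q(\alpha)$, and the delicate points there are merely bookkeeping (tracking signs of eigenvectors and simplicity of eigenvalues). The step I would treat most carefully, and the only genuinely conceptual one, is the interplay in (3) between the inclusion $\ker r\subseteq\Aut(X)$ and the surjectivity of $r$: these are exactly what allow the purely lattice-theoretic description of $\mcal B(X)$ to be transported back to honest birational and biregular automorphisms, and without the characterization of Lemma 2.4 the lifts $\tau$ and $\sigma$ would be controlled only modulo $\ker r$ rather than known to satisfy $\tau^2\in\Aut(X)$ and to generate $\Bir(X)$ together with $\Aut(X)$.
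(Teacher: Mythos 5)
Your proposal is correct, and its skeleton matches the paper's: establish that $\alpha$ satisfies a monic integral quadratic $t^2-ct+1$, deduce irrationality, solve for the ray coordinates in $\Q(\alpha)$, and in (3) lift the generators of $\mcal B^+(X)\cup\mcal B^-(X)$ through $r$ using $\ker r\subseteq\Aut(X)$. The mechanisms in (1)--(2) differ in an interesting way, though. The paper obtains the integrality of $\alpha+\alpha^{-1}$ by rescaling so that $h=y_1+y_2$ is a \emph{primitive} integral class, noting that $h'=fh$ and $h''=f^2h$ are primitive integral, and reading off the relation $h+h''=(\alpha+\tfrac1\alpha)h'$; you get the same fact directly as the trace of the integral matrix $M$ representing $f$, which is cleaner. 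More substantively, the paper deduces $\alpha\notin\Q$ from the irrationality of the ray $m_1$ (via the formula $y_1=\frac{1}{\alpha^2-1}(\alpha h'-h)$ and Theorem \ref{thm:Oguiso}), whereas you argue intrinsically: $\alpha$ is an algebraic integer, so $\alpha\in\Q$ would force $\alpha\in\Z$, and then $\alpha^{-1}=c-\alpha\in\Z$ gives $\alpha=1$, contradicting that $f$ has infinite order in $\mcal B^+(X)\simeq\Z$. This avoids invoking Oguiso's rationality criterion altogether and is self-contained. In (2) the paper solves the two linear equations $a+c=p$, $\alpha a+c/\alpha=q$ coming from integrality of $h$ and $h'$, while you take the kernel of $M-\alpha I$ (resp.\ $M-\alpha^{-1}I$) over $\Q(\alpha)$, using simplicity of the eigenvalue; these are equivalent routine computations, and your sign adjustment and the identification of the $\Q(\alpha)$-eigenline with the real eigenline are handled correctly. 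In (3) your argument is the paper's, except that where the paper cites \cite[Proposition 2.4]{Og12} for the implication $r(g)=\id\Rightarrow g\in\Aut(X)$, you derive it from Lemma 2.4 together with the fact that ampleness is a numerical property — a legitimate and slightly more transparent substitute; your explicit treatment of the case $\mcal B^-(X)=\emptyset$ via $\tau=\id$ is also consistent with the ``possibly the identity'' clause of the statement.
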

\begin{proof}
By rescaling $y_1$ and $y_2$, we can assume that
$$h=y_1+y_2$$ 
is a primitive integral class in $N^1(X)_\R$. Denote
$$h'=fh=\alpha y_1+\frac1\alpha y_2\quad\text{and}\quad h''=f^2h=\alpha^2 y_1+\frac1{\alpha^2} y_2;$$
these are again primitive integral classes since $\mcal B(X)$ preserves $N^1(X)$. Then an easy calculation shows that
$$h+ h''=\frac{\alpha^2+1}{\alpha}h',$$
and hence the number $\frac{\alpha^2+1}{\alpha}=\alpha+\frac1\alpha$ is an integer. Since 
$$y_1=\frac1{\alpha^2-1}(\alpha h'-h),$$
and $y_1$ is not rational by Theorem \ref{thm:Oguiso}, the number $\alpha$ cannot be rational, and (1) follows.
\vskip .2cm
For (2) fix an integral basis $\{v,w\}$ of $N^1(X)_\R$, and write 
$$y_1=av+bw\quad\text{and}\quad y_2=cv+dw.$$
Then 
$$h=(a+c)v + (b+d)w\quad\text{and}\quad h'=(\alpha a+c/\alpha)v+(\alpha b+d/\alpha)w.$$
Write $p=a+c$ and $q=\alpha a+c/\alpha$, and note that $p,q\in\Z$. Then an easy calculation shows that $a,c\in\Q(\alpha)$, and similarly for $b$ and $d$.\vskip .2cm 
Finally, for (3), note that by Theorem \ref{automorphismDet1} and Lemma \ref{lemma:plusminus}, we have $\mcal B(X)=\mcal B^+(X)\cup\mcal B^-(X)$, where $\mcal B^+(X)$ is infinite 
cyclic with generator $\sigma'$, and $\mcal B^-(X)=\mcal B^+(X)\tau'$ for any $\tau'\in\mcal B^-(X)$. Pick $\tau,\sigma\in\Bir(X)$ such that 
$$r(\tau)=\tau'\quad\text{and}\quad r(\sigma)=\sigma',$$
see Notation \ref{notation}. Since $r(\tau^2)=\tau'^2=\id$ by Lemma \ref{lemma:squaredMinus}, it follows that $\tau^2$ is an isomorphism by \cite[Proposition 2.4]{Og12}. Now take an element $\theta$ is any element of $\Bir(X)$, 
then there exist integers $k$ and $\ell$ such that $r(\theta)=\sigma'^k\tau'^\ell=r(\sigma^k\tau^\ell)$, and we conclude again by \cite[Proposition 2.4]{Og12}.
\end{proof}

\begin{rem}
We are indebted to the referee for pointing out the following example, which provides a variety satisfying the assumptions of Theorem \ref{unique} in any dimension $n\geq 3$.

Let $X$ be the complete intersection
$$H_1 \cap H_2 \cap\dots\cap H_{n-1}\cap Q \subseteq \mathbb P^n \times \mathbb P^n,$$
where $n\geq 3$, where $H_i$ are general hypersurfaces of bidegree $(1, 1)$, and where $Q$ is a general hypersurface of bidegree $(2, 2)$. Then $X$ is a simply connected Calabi-Yau $n$-fold with Picard number two. More precisely, $\Pic (X) = \Z L_1 \oplus \Z L_2$, where $L_1$ and $L_2$ are pullbacks of the hyperplane classes of factors $\mathbb P^n$. Consider the two birational involutions $\iota_1, \iota_2$ induced by the two natural projections of $X$ to $\mathbb P^n$. Then $\iota_1\iota_2$ is a birational automorphism of $X$ of infinite order. The last statement can be checked by computing $(\iota_1\iota_2)^*L_i$ as in \cite[Proposition 6.1]{Og12}. 
\end{rem}

\begin{rem}
One can obtain a similar description of the cone $\Nef(X)$ when the automorphism group of $X$ is infinite.

Basically there are two types of simply connected irreducible Calabi-Yau manifolds: those which do not carry any holomorphic forms of intermediate degree -- these manifolds are often simply called Calabi-Yau manifolds -- and hyperk\"ahler manifolds carrying a non-degenerate holomorphic $2$-form. While in the hyperk\"ahler case the nef cone can be irrational by \cite[Proposition 1.3]{Og12}, it is believed that the nef cone 
of a ``strict'' Calabi-Yau manifold with, say, $\rho(X) = 2$, must be rational. The evidence is provided by the fact that in odd dimensions $\Aut(X)$ is finite, and then the Cone conjecture would imply
the rationality. In even dimensions we saw that an infinite automorphism group on a strict Calabi-Yau manifold with Picard number two is possible only in very special circumstances. 
\end{rem}

\bibliographystyle{amsalpha}

\bibliography{biblio}
\end{document}